\documentclass[12pt]{article}
\usepackage{fullpage}
\usepackage{amsmath}
\usepackage{amssymb}
\usepackage{amsthm}

\newtheorem{theorem}{Theorem}
\newtheorem{cor}{Corollary}
\newtheorem{prop}{Proposition}
\newtheorem{lemma}{Lemma}

\def\ord{ord}
\def\eps{\varepsilon}
\def\F{\mathbb{F}}
\def\Z{\mathbb{Z}}

\title{Explicit upper bounds on the least primitive root}
\author{Kevin J. McGown\\ Department of Mathematics and Statistics\\ California State University, Chico\\
School of Science\\The University of New South Wales, Canberra\\
kmcgown@csuchico.edu \and Tim Trudgian\footnote{Supported by Australian Research Council Future Fellowship FT160100094.}\\ School of Science\\ The University of New South Wales, Canberra\\
  t.trudgian@adfa.edu.au}

\begin{document}

\maketitle
\begin{abstract}
\noindent
We give a method for producing explicit bounds on $g(p)$, the least primitive root modulo $p$.
Using our method we show that
$g(p)<2r\,2^{r\omega(p-1)}\,p^{\frac{1}{4}+\frac{1}{4r}}$ for $p>10^{56}$
where $r\geq 2$ is an integer parameter.
This result beats existing bounds that rely on explicit versions of the Burgess inequality.
Our main result allows one to derive bounds of differing shapes for various ranges of $p$.
For example, our method also allows us to show that $g(p)<p^{5/8}$ for all $p\geq 10^{22}$
and $g(p)<p^{1/2}$ for $p\geq 10^{56}$.
\end{abstract}

\section{Introduction}\label{intro}

Let $p$ be an odd prime and let $g(p)$ denote the least primitive root modulo $p$.
Giving an upper bound on $g(p)$ is a classical problem that has received much attention.
The best known asymptotic bound, due to Burgess \cite{Burgess62}, is
\mbox{$g(p)\ll p^{1/4+\eps}$}.

Consider the indicator function
\begin{align*}
f(n)=
\begin{cases}
  1 & \text{if $n$ is a primitive root modulo $p$}\\
  0 & \text{otherwise}\,.  
  \end{cases}
\end{align*}
It is well-known (going back at least to Landau) that
\[
 f(n)=\frac{\phi(p-1)}{p-1}\sum_{d|p-1}\frac{\mu(d)}{\phi(d)}\sum_{\ord(\chi)=d}\chi(n)
 \,,
\]
where the inner sum is taken over all $\phi(d)$ Dirichlet characters $\chi$ with multiplicative order~$d$.
Let $r\geq 2$ be an integer.  Using an explicit version of the Burgess inequality of the form
\begin{equation}\label{slice}
  \left|\sum_{n\leq H}\chi(n)\right|\leq C(r)H^{1-\frac{1}{r}}p^{\frac{r+1}{4r^2}}(\log p)^{\frac{1}{2r}}
\end{equation}
one can show\footnote{This is implicit in the work of Cohen, Oliveira e Silva, and Trudgian \cite[p.\ 264]{Cohen}.} that
\begin{equation}\label{dice}
  g(p)\leq C(r)^r\,2^{r\omega(p-1)} p^{\frac{1}{4}+\frac{1}{4r}}(\log p)^{\frac{1}{2}}
\end{equation}
for $p$ sufficiently large. 
For example, when $p\geq 10^{15}$,
Trevi\~no \cite{Trevino} obtains the following constants in (\ref{slice}) which are the best known.


\begin{table}[ht]
 \centering
 \caption{Trevi\~{n}o's values for $C(r)$ and $C(r)^{r}$ in (\ref{slice}) and (\ref{dice})}\medskip
\begin{tabular}{c|c|c}
$r$ & $C(r)$ & $C(r)^r$\\
\hline
2 & 3.5851 & 12.8530\\
3 & 2.5144& 15.8966\\
4 & 2.1258& 20.4216\\
5 & 1.9231& 26.3033 \\
6 & 1.7959& 33.5501 \\
7 & 1.7066& 42.1621\\
8 & 1.6384& 51.9230 \\
9 & 1.5857 & 63.3855\\
10 & 1.5410& 75.5139\\
\end{tabular}
\end{table}

We prove the following explicit upper bound for $g(p)$
that not only improves these constants,
but also removes the log term completely.

\begin{theorem}\label{T:1}
For all $p\geq 10^{56}$ and for any integer $r\geq 2$, we have
\[
  g(p)<2r\,2^{r\omega(p-1)}\,p^{\frac{1}{4}+\frac{1}{4r}}
  \,.
\]
\end{theorem}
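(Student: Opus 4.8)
The plan is to show that the interval $(0,H]$ contains a primitive root modulo $p$, where $H:=2r\,2^{r\omega(p-1)}p^{1/4+1/(4r)}$; this gives $g(p)\le\lfloor H\rfloor<H$. We may assume $H<p$, for otherwise the asserted bound is immediate from $g(p)<p$. Write $N(H)$ for the number of primitive roots in $(0,H]$ and $S_\chi(H):=\sum_{n\le H}\chi(n)$.

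First I would take the displayed identity for $f$ and sum it over $1\le n\le\lfloor H\rfloor$. The $d=1$ term contributes $S_{\chi_0}(H)=\lfloor H\rfloor$ (for the principal character $\chi_0$, using $H<p$), so
\[
N(H)=\frac{\phi(p-1)}{p-1}\left(\lfloor H\rfloor+\sum_{\substack{d\mid p-1\\ d>1}}\frac{\mu(d)}{\phi(d)}\sum_{\ord(\chi)=d}S_\chi(H)\right).
\]
Bounding $\bigl|\sum_{\ord(\chi)=d}S_\chi(H)\bigr|\le\phi(d)M$, where $M:=\max_{\chi\ne\chi_0}|S_\chi(H)|$, and using $\sum_{d\mid p-1,\,d>1}|\mu(d)|=2^{\omega(p-1)}-1$, this yields
\[
N(H)\ge\frac{\phi(p-1)}{p-1}\bigl(\lfloor H\rfloor-(2^{\omega(p-1)}-1)\,M\bigr),
\]
so it is enough to prove $M<\lfloor H\rfloor/(2^{\omega(p-1)}-1)$.

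The engine for this will be a \emph{logarithm-free} explicit Burgess-type bound
\[
|S_\chi(H)|\le c(r)\,H^{1-1/r}\,p^{(r+1)/(4r^2)}\qquad(\chi\ne\chi_0,\ H\ge p^{1/4}),
\]
with a constant satisfying $c(r)^r<2r$ for all $p\ge10^{56}$ (one may allow $c(r)$ to depend on $p$ and decrease as $p\to\infty$). Granting this, substitute $H=2r\,2^{r\omega(p-1)}p^{1/4+1/(4r)}$ and use $\tfrac1r\bigl(\tfrac14+\tfrac1{4r}\bigr)=\tfrac{r+1}{4r^2}$ to obtain
\[
H^{1-1/r}\,p^{(r+1)/(4r^2)}=H\cdot H^{-1/r}p^{(r+1)/(4r^2)}=(2r)^{-1/r}\,2^{-\omega(p-1)}\,H.
\]
Hence $M\le c(r)(2r)^{-1/r}2^{-\omega(p-1)}H$; since $c(r)(2r)^{-1/r}<1$ and $H$ is enormous (so the floor correction is harmless), this is $<\lfloor H\rfloor/(2^{\omega(p-1)}-1)$, giving $N(H)>0$ and therefore $g(p)<H$.

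The hard part is the Burgess input: a uniform, logarithm-free Burgess inequality with $c(r)^r<2r$, valid for $p\ge10^{56}$ and for every interval length $H$ in the pertinent window --- which satisfies $H\ge 2r\,2^rp^{1/4+1/(4r)}$, comfortably above $p^{1/4}$, so that Burgess is non-trivial. I would extract it by re-running Burgess's argument in Trevi\~no's explicit form with two improvements: the factor $\log p$ in~(\ref{slice}) is introduced at a single counting step and can be removed once $H\ge p^{1/4}$, and the constant can be sharpened by optimising Burgess's free parameters (the dilation modulus and the H\"older exponent) against the true size of $H$ rather than against the worst case $H\asymp p^{1/4}$. For the part of the range where $H$ is comparatively long --- equivalently $\omega(p-1)$ large, in which case $p$ is automatically large --- the P\'olya--Vinogradov inequality can be substituted for Burgess. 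The hypothesis $p\ge10^{56}$ is precisely what makes the numerical constants fit: it renders the discarded logarithm and the tail and completion errors of Burgess's method negligible, and (via $\omega(p-1)\ll\log p/\log\log p$) it lets one verify the remaining elementary inequalities, in particular $c(r)\bigl(1-2^{-\omega(p-1)}\bigr)<(2r)^{1/r}$.
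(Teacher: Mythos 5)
Your reduction to the single inequality $M<\lfloor H\rfloor/(2^{\omega(p-1)}-1)$, with $M=\max_{\chi\neq\chi_0}|\sum_{n\le H}\chi(n)|$, is the standard argument and is carried out correctly; but it shifts the entire burden onto the claimed ``logarithm-free explicit Burgess-type bound'' with $c(r)^r<2r$, and that input is a genuine gap, not a technicality. Quantitatively, for $r=2$ the best known explicit constant is Trevi\~no's $C(2)^2\approx 12.85$, and the inequality carries an additional factor $(\log p)^{1/2}\approx 11.4$ at $p=10^{56}$; you are therefore asserting, without proof, an improvement of the final bound by a factor of roughly $36$ over the state of the art. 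The assertion that the $\log p$ ``is introduced at a single counting step and can be removed once $H\ge p^{1/4}$'' is unsupported: in Burgess's method the logarithm enters through the multiplicity with which the translates $q(z+n)-pt$ cover residues (a divisor-type count), and it does not disappear for $H$ above $p^{1/4}$. Indeed the paper states explicitly that removing the logarithm from the Burgess inequality ``does not seem possible at present''; a proof that presupposes it is circular with respect to the main difficulty. The P\'olya--Vinogradov fallback only covers the regime of very large $\omega(p-1)$ and does not touch the main range.

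The paper's actual route avoids ever bounding an individual incomplete character sum. It sums the sieve identity for the primitive-root indicator over the Burgess intervals $\mathcal{I}(q,t)$, $\mathcal{J}(q,t)$ of Proposition~\ref{intervals}, applies H\"older's inequality once to the aggregated sum $\mathcal{S}$, and then \emph{completes} the resulting $2r$-th moment to a sum over all of $\F_p$, where the Weil bound (Proposition~\ref{prop:weil}) applies; the explicit point counts $A(X)$ and $B(X)$ replace the multiplicity counting that produces the logarithm in the classical argument. This yields Theorem~\ref{T:3}, which is then combined with the a priori bound $g(p)<p^{1/2}$ for $p\ge 10^{56}$ (Corollary~\ref{lonely}) via the bootstrap of Theorem~\ref{T:Win} -- that hypothesis is what allows the choice $2HX<p$ to be verified and is where the threshold $10^{56}$ enters. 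To repair your proposal you would either need to prove the log-free Burgess inequality you invoke (which would be a significant new result in its own right) or restructure the argument along these lines.
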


The novelty in our proof is that we do not use the Burgess inequality directly,
although we will use many ingredients that go into its proof.
Indeed, although the exponent on the $\log p$ term in the Burgess inequality
has been recently improved (see~\cite{Kerr}) it does not seem
possible at present to remove it completely.

Theorem~\ref{T:1} is a consequence of our main theorem (see Theorem~\ref{T:3}) which is more flexible but more complicated to state.
Moreover, it holds for all $p$ and allows one to derive bounds of differing shapes for various ranges of $p$.
For example, one corollary is the following easy-to-state explicit bound.
\begin{cor}\label{T:2}
For all $p\geq 10^{22}$, we have $g(p)<p^{5/8}$.
\end{cor}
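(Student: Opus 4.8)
The plan is to obtain Corollary~\ref{T:2} as a special case of the main theorem (Theorem~\ref{T:3}): take the target value to be $H=p^{5/8}$, and choose the auxiliary integer parameter $r$ (together with whatever further parameters Theorem~\ref{T:3} permits, such as a cutoff splitting the prime divisors of $p-1$ into a ``small'' part and a ``large'' part) so that the hypothesis of Theorem~\ref{T:3} is satisfied for every $p\ge 10^{22}$. The corollary then reduces to checking an explicit numerical inequality. Since $H=p^{5/8}$ leaves a full factor of roughly $p^{3/8}$ in hand over the classical barrier $p^{1/4}$, I expect small $r$ to suffice, and would begin with $r=2$, raising $r$ only if the inequality turns out to be too tight near the bottom of the range.

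Concretely, Theorem~\ref{T:3} should reduce ``there is a primitive root below $H$'' to an inequality of the form $\Lambda(p,r)\le 1$, where, schematically, $\Lambda(p,r)$ is the product of a modest constant depending only on $r$, a factor measuring the prime divisors of $p-1$ (at most $2^{r\omega(p-1)}$ — which is what yields Theorem~\ref{T:1} — but retained in a sharper form, e.g.\ as $2^{r\omega(u)}$ for the small part $u$ of $p-1$ once large prime factors have been split off), and the ratio $p^{1/4+1/(4r)}H^{-1}=p^{-3/8+1/(4r)}$. Taking logarithms, the condition becomes
\[
 r\,\omega(p-1)\log 2 + c_r \;\le\; \Bigl(\tfrac38-\tfrac1{4r}\Bigr)\log p
\]
for an explicit constant $c_r$, and this holds for fixed $r$ as soon as $\omega(p-1)$ is not too large relative to $\log p$.

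I would then argue by cases on the size of $\omega(p-1)$. If $\omega(p-1)$ is below an explicit threshold $k_0$ small enough that the displayed inequality holds with $r=2$ for all $p\ge 10^{22}$, we are done at once. If instead $\omega(p-1)$ is large, then $p-1$ is divisible by a long initial segment of primes, so $p-1$ exceeds the corresponding primorial; combining this lower bound on $p$ with the splitting form of Theorem~\ref{T:3} — in which only the small prime divisors are sieved out through the character-sum machinery (contributing $2^{r\omega(u)}$ with $\omega(u)$ controlled), while each large prime divisor $q$ is discarded using the cheap fact that the $q$th power residues up to $H$ have density at most $1/q$ — should restore the inequality, perhaps after optimizing $r$ and the cutoff. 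Any remaining finitely many factorization patterns of $p-1$ near $p=10^{22}$ could be settled by direct computation.

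The step I expect to be the genuine obstacle is exactly this verification near $p=10^{22}$, where the gap between $p^{5/8}$ and $p^{1/4+1/(4r)}$ is narrowest: one must make sure that the worst configuration, namely $p$ barely above $10^{22}$ with $p-1$ divisible by as many small primes as possible, still meets the hypothesis of Theorem~\ref{T:3}. Doing this cleanly calls for sharp explicit bounds on $\omega(n)$ (equivalently, on the product of the small primes dividing $n$) valid for all $n\ge 10^{22}$, a careful choice of $r$ and of the small/large cutoff, and possibly a little computation; I do not anticipate needing any idea beyond Theorem~\ref{T:3} itself.
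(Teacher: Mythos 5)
Your proposal follows essentially the same route as the paper: the paper applies Theorem~\ref{T:3} with $H=p^{5/8}$, $r=2$ and $h=\lceil 2p^{1/4}\rceil$, reduces the claim to the explicit inequality $13\bigl(\bigl(2+\tfrac{s-1}{\delta}\bigr)2^{\omega-s}\bigr)^4<p^{1/2}$, and settles it by exactly the case analysis on $\omega(p-1)$ that you outline (small $\omega$ with $s=0$, larger $\omega$ via primorial lower bounds on $p$ and a cutoff $s$ near $\omega-3$, and Robin's explicit bound on $\omega$ for very large $p$). The only ingredient you leave unspecified is the choice of $h$, which the paper fixes so that $2H^2<hp$ and $W(p,h,2)\le 15/4$.
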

We note that proving good results of the form $g(p) \leq p^{\alpha}$ for all primes $p$ appears to be difficult.
Cohen, Oliveira e Silva and Trudgian \cite{Cohen} proved that $g(p) \leq 5.2p^{0.99}$. This was improved to $p^{0.96}$ by Cohen and Trudgian \cite{CohenT}, to $p^{0.88}$ by Hunter \cite{Hunter}, and to $p^{0.68}$ by Pretorius \cite{Pretorius}.  These latter results use numerically efficient versions of the P\'{o}lya--Vinogradov inequality (see, e.g., \cite{Sound})
together with some amount of computation.
In this investigation, we will not pursue a result that holds for all~$p$.

Grosswald \cite{Grosswald81} conjectured that $g(p)<\sqrt{p}-2$ for all $p>409$;
he showed that this implies that for $p>2$, the principal congruence subgroup
$\Gamma(p)$ can always be freely generated by the matrix $[1, p; 0, 1]$ and
$p(p-1)(p+1)/12$ additional \emph{hyperbolic} matrices.
Cohen, Oliveira e Silva, and Trudgian \cite{Cohen} proved that this holds
except possibly when $p\in(2.5\cdot 10^{15}, 10^{71})$. This has recently been improved by Jarso, Kerr, and Shparlinski \cite{Jarso} who showed that $g(p)<\sqrt{p}-2$ for all $p\geq 10^{65}$. We improve this further in the following result.

\begin{cor}\label{lonely}
When $p\geq 10^{56}$, we have $g(p)<(0.999)p^{1/2}< \sqrt{p} -2$.
\end{cor}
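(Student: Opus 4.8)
The plan is to derive the first inequality, $g(p)<0.999\,p^{1/2}$, from the main theorem (Theorem~\ref{T:3}) by a suitable choice of parameters, and to dispose of the second inequality by a one-line estimate. I take the elementary part first: $0.999\,p^{1/2}<\sqrt p-2$ is equivalent to $0.001\sqrt p>2$, i.e.\ $\sqrt p>2000$, which holds for every $p\ge 10^{56}$.

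For the main inequality I would use that Theorem~\ref{T:3} is a flexible estimate: besides the integer parameter $r\ge 2$ inherited from the Burgess-type input \eqref{slice}, it lets one choose a threshold $y$, handling the prime divisors $q\mid p-1$ with $q\le y$ via character sums and discarding the remaining prime divisors $q>y$ by an elementary argument — there are at most $\log(p-1)/\log y$ of the latter, and each excludes only a proportion $1/q<1/y$ of residue classes. Consequently, for fixed $r$ and $y$, Theorem~\ref{T:3} yields a bound of the form $g(p)<\kappa\,p^{1/4+1/(4r)}$ in which $\kappa=\kappa(r,y)$ is an explicit constant that does \emph{not} grow with $\omega(p-1)$, valid as soon as $p$ is large enough for a certain explicit side condition (encoding the smallness of the discarded tail and the comparison of main and error terms) to hold. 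One then fixes $r$ and $y$, rewrites the target $g(p)<0.999\,p^{1/2}$ as $\kappa<0.999\,p^{1/4-1/(4r)}$, notes that the right-hand side increases with $p$, and so reduces everything to checking this single numerical inequality, together with the side condition, at $p=10^{56}$.

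The step I expect to require the most care is the choice of $r$ and $y$: enlarging $r$ shrinks the exponent $1/4+1/(4r)$ — hence enlarges the power $p^{1/4-1/(4r)}$ one gets to exploit — but enlarges $\kappa$ through the Burgess constants and the number of small prime divisors carried, while varying $y$ trades the size of $\kappa$ against the size of the discarded tail. One must exhibit a single admissible pair for which the inequality at $p=10^{56}$ holds with the constant $0.999$ rather than merely $1$; if the computation behind the bound $g(p)<p^{1/2}$ for $p\ge 10^{56}$ already has slack exceeding one part in a thousand at the endpoint, the claim is immediate, and otherwise a modest increase of $r$ — which buys the saving $p^{1/4-1/(4r)}$, enormous at $p=10^{56}$ — recovers the factor $0.999$. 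Chaining the two inequalities then gives $g(p)<\sqrt p-2$ for all $p\ge 10^{56}$, improving the threshold $10^{65}$ of \cite{Jarso} for Grosswald's conjecture.
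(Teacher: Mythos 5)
Your overall strategy---apply Theorem~\ref{T:3} with the sieve, reduce to an explicit numerical condition, and dispose of $0.999\,p^{1/2}<\sqrt p-2$ by noting it is equivalent to $\sqrt p>2000$---is the same as the paper's, which takes $H=(0.999)p^{1/2}$, $h=\lceil p^{1/4}\rceil$, $r=2$, and reduces matters to the condition $7\bigl((2+(s-1)/\delta)2^{\omega-s}\bigr)^4<p^{1/4}$.

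However, there is a genuine gap in your reduction. You assert that for fixed $r$ and fixed threshold $y$ the resulting constant $\kappa(r,y)$ ``does not grow with $\omega(p-1)$,'' and on that basis you collapse the whole verification to a single numerical check at $p=10^{56}$. This is not correct. The sieve factor in Theorem~\ref{T:3} is $\bigl((2+\frac{s-1}{\delta})2^{\omega-s}\bigr)^{2r}$; keeping only a bounded number of small primes in $e$ does make $2^{\omega-s}$ bounded, but the number $s$ of discarded primes---and hence the factor $2+(s-1)/\delta$---still grows with $\omega(p-1)$, by your own count up to $\log(p-1)/\log y$. Since $\omega(p-1)$ is not a monotone function of $p$, the inequality $\kappa<0.999\,p^{1/4-1/(4r)}$ cannot be certified by evaluating both sides at $p=10^{56}$ alone. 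The actual verification requires a case analysis over ranges of $\omega$: for small $\omega$ take $s=0$; for moderate $\omega$ take $s=\omega-3$ and bound $\delta\geq 1-\sum_{i=3}^{\omega}q_i^{-1}$ from below; for larger $\omega$ exploit $p>\prod_{i\leq\omega}q_i$; and for very large $p$ invoke Robin's bound $\omega\leq 1.39\log p/\log\log p$. This case analysis (carried out in the proof of Corollary~\ref{T:2} and invoked again here) is the substantive content of the proof, and it is missing from your proposal, as is any concrete choice of $r$, $h$, and $H$.
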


Finally, we also give a sieved version of Theorem~\ref{T:1},
which we believe to be useful in applications.
\begin{theorem}\label{Odd fellow}
Let $p\geq 10^{56}$ be prime.
Let $e$ be an even divisor of $p-1$ and let $p_1,\dots,p_s$ be the primes dividing $p-1$ that do not divide $e$.
Set $\delta=1-\sum_{i=1}^sp_i^{-1}$.  Provided $\delta>0$, we have
\[
  g(p)<
  2\,r\left(
  \left(2+\frac{s-1}{\delta}\right)
  2^{\omega(p-1)-s}
  \right)^r
  \,p^{\frac{1}{4}+\frac{1}{4r}}.
\]
\end{theorem}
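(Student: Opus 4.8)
\noindent\emph{Plan of proof.} I would deduce Theorem~\ref{Odd fellow} along the same lines that give Theorem~\ref{T:1} from Theorem~\ref{T:3}, inserting one extra ingredient: a sieve that replaces the factor $2^{\omega(p-1)}$ by the smaller quantity $K:=\bigl(2+\tfrac{s-1}{\delta}\bigr)2^{\omega(p-1)-s}$. Since the primes dividing $p-1$ are exactly the primes dividing $e$ together with $p_1,\dots,p_s$, we have $\omega(e)=\omega(p-1)-s$; set $B=2^{\omega(e)}$.

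First I would set up the sieve. For $m\mid p-1$ let $f_m(n)=1$ if $n$ is not a $q$-th power residue modulo $p$ for any prime $q\mid m$, and $f_m(n)=0$ otherwise, so that $f_{p-1}=f$ and, with $\theta(m)=\prod_{q\mid m}(1-q^{-1})$,
\[
  f_m(n)=\theta(m)\sum_{d\mid\mathrm{rad}(m)}\frac{\mu(d)}{\phi(d)}\sum_{\ord(\chi)=d}\chi(n).
\]
A standard Bonferroni-type sieving inequality (see~\cite{Cohen}) gives the pointwise bound $f(n)\ge\sum_{i=1}^{s}f_{ep_i}(n)-(s-1)f_e(n)$. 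Since no $p_i$ divides $e$, splitting the divisors of $\mathrm{rad}(ep_i)$ according to divisibility by $p_i$ gives
\[
  f_{ep_i}(n)=\bigl(1-p_i^{-1}\bigr)f_e(n)+\theta(ep_i)\sum_{d\mid\mathrm{rad}(e)}\frac{\mu(p_id)}{\phi(p_id)}\sum_{\ord(\chi)=p_id}\chi(n),
\]
and, since $\sum_{i=1}^{s}(1-p_i^{-1})-(s-1)=\delta$, the sieve bound collapses --- \emph{before} any absolute value is taken --- to
\[
  f(n)\ge\delta\,f_e(n)+\sum_{i=1}^{s}\theta(ep_i)\sum_{d\mid\mathrm{rad}(e)}\frac{\mu(p_id)}{\phi(p_id)}\sum_{\ord(\chi)=p_id}\chi(n).
\]

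Then I would sum over $1\le n\le H$ with $H<p$ (if the asserted bound is $\ge p$ there is nothing to prove, since $g(p)\le p-1$). Write $S_\chi(H)=\sum_{n\le H}\chi(n)$ and $M(H)=\max_{\chi\ne\chi_0}|S_\chi(H)|$, and use $\phi(d)^{-1}\sum_{\ord(\chi)=d}|S_\chi(H)|\le M(H)$. In $\delta\sum_{n\le H}f_e(n)$ only the principal character gives a main term, namely $\delta\theta(e)H$, and the $B-1$ divisors $d\mid\mathrm{rad}(e)$ with $d>1$ contribute at most $\delta\theta(e)(B-1)M(H)$; in the double sum every character is non-principal, and for each $i$ the $B$ divisors $d\mid\mathrm{rad}(e)$ contribute at most $\theta(ep_i)BM(H)$, so, using $\sum_i\theta(ep_i)=\theta(e)(s-1+\delta)$, the double sum contributes at most $\theta(e)(s-1+\delta)BM(H)$. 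Combining,
\[
  \sum_{n\le H}f(n)\ge\theta(e)\Bigl(\delta H-\bigl(2\delta B+(s-1)B-\delta\bigr)M(H)\Bigr),
\]
which is positive --- whence $g(p)\le H$ --- as soon as $M(H)\le H/K$. I expect this bookkeeping to be the crux: it is the identity $\sum_{i}(1-p_i^{-1})-(s-1)=\delta$, used before estimating the low-order character sums, that produces the coefficient $(s-1)/\delta$; estimating the $s$ contributions $f_{ep_i}$ and the contribution $f_e$ separately would give the weaker $3(s-1)/\delta$.

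Finally I would invoke the machinery of Theorem~\ref{T:3}, which for $p\ge10^{56}$ delivers $g(p)<2rN^{r}p^{1/4+1/4r}$ for any $N$ that is an admissible multiplicity bound, i.e.\ such that $M(H)\le H/N$ forces $\sum_{n\le H}f(n)>0$ for a suitable $H$; Theorem~\ref{T:1} is the case $N=2^{\omega(p-1)}$. By the previous step $N=K$ is admissible, and since $K=\bigl(2+\tfrac{s-1}{\delta}\bigr)2^{\omega(p-1)-s}$ this is exactly the claimed bound.
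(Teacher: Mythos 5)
Your sieve step is essentially Proposition~\ref{propsieve} of the paper, and your bookkeeping of the weights (the identity $\sum_i(1-p_i^{-1})-(s-1)=\delta$ producing the coefficient $2+\frac{s-1}{\delta}$ times $2^{\omega(e)}$) is exactly how the factor $K$ arises there. The gap is in your final step. You reduce everything to the criterion ``$M(H)\le H/K$ forces $\sum_{n\le H}f(n)>0$,'' where $M(H)=\max_{\chi\neq\chi_0}|\sum_{n\le H}\chi(n)|$, and then assert that the machinery of Theorem~\ref{T:3} converts any such ``admissible'' $K$ into the bound $2rK^rp^{1/4+1/(4r)}$. That is not what Theorem~\ref{T:3} does, and the reduction to $M(H)$ is precisely the move the paper is designed to avoid. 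To verify $M(H)\le H/K$ for $H$ of the claimed size you would need a Burgess inequality of the form (\ref{slice}), which forces $H\gg C(r)^rK^rp^{1/4+1/(4r)}(\log p)^{1/2}$ --- i.e.\ the weaker bound (\ref{dice}) with its constant $C(r)^r$ and its logarithm. The long sum $\sum_{n\le H}\chi(n)$ is never isolated in the paper's argument: the proof of Theorem~\ref{T:3} sums $f(\pm q(z+n))$ over the Burgess interval system $\mathcal{I}(q,t),\mathcal{J}(q,t)$, applies H\"older to the short sums $\sum_{n=0}^{h-1}\chi(z+n)$, completes to the full moment $S_\chi(p,h,r)$, and uses the Weil bound. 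What the machinery actually requires of $K$ is that it bound the total $\ell^1$ mass of the normalized sieve weights (so that $|\mathcal{S}|\le K\sum_{t,q,z}\max_\chi|\sum_n\chi(z+n)|$); your ``admissibility'' condition is a consequence of that mass bound, not a substitute for it.

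The paper's actual route is shorter than you expect because the sieve is already built into Theorem~\ref{T:3}: the factor $\bigl(2+\frac{s-1}{\delta}\bigr)2^{\omega-s}$ appears in its hypothesis. One applies Theorem~\ref{T:3} with general $e$ and $s$, sets $H=2rK^rp^{1/4+1/(4r)}$ and $h=\lceil\frac{2r}{e}(2p)^{1/(2r)}(\frac{r-1}{2r-1})^{1/r}\rceil$ as in the proof of Theorem~\ref{T:Win}, and rechecks the numerical conditions ($2HX<p$, the bounds $W(p,h,r)\le r(2r-1)/(r-1)$, $A(X)^{2r}$, $B(X)^{2r-1}$, and the analogue of (\ref{finale})); these change slightly because $X\ge 500$ rather than $2000$, which is the content of Theorem~\ref{T:Win2}. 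This yields the conclusion under the hypothesis $g(p)<p^{1/2+1/(4r)}$, which is then discharged for $p\ge 10^{56}$ by Corollary~\ref{lonely}. None of these verifications appear in your proposal, and they cannot be bypassed by the $M(H)$ criterion.
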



Throughout this paper, $p$ will denote an odd prime.
We will write $\omega=\omega(p-1)$ for the number of distinct prime factors of $p-1$.
We write $\phi(n)$ to denote the Euler totient function, $\mu(n)$ to denote the M\"obius function, and $\theta(n)$ to denote the multiplicative function $\theta(n)=\phi(n)/n$.
The notation $f(n)$ will always denote the primitive root indicator function.

In \S\ref{king} we collect some preliminary results, in~\S\ref{queen} we prove our main result in Theorem \ref{T:3} and in~\S\ref{prince} we
flesh out the consequences of this result which includes the proofs of the results mentioned in~\S\ref{intro}.

\section{Preparations}\label{king}

We require three primary ingredients.  The first is an upper bound on a certain character sum that draws its strength from the Weil bound, the second is an estimate on the number of integer points in a special collection of intervals, and the third is a combinatorial sieve.

\subsection{A character sum estimate}

Define the sum
\[
  S_\chi(p,h,r):=\sum_{x\in\F_p}\left|\sum_{n=0}^{h-1}\chi(x+n)\right|^{2r}
  \,.
\]

\begin{lemma}\label{stirling}
We have
\[
  \left(\frac{2r}{e}\right)^r
  <
  \frac{(2r)!}{2^r r!}<
  \sqrt{2}
  \left(\frac{2r}{e}\right)^r.
\]
\end{lemma}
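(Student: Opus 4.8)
The plan is to recognize the middle quantity as a product and apply known bounds on factorials. First I would observe that
\[
  \frac{(2r)!}{2^r r!} = \frac{(2r)!}{2^r r!} = \prod_{k=1}^{r}(2k-1) = 1\cdot 3\cdot 5\cdots(2r-1),
\]
the double factorial $(2r-1)!!$, since $(2r)! = (2r)!! \cdot (2r-1)!! = 2^r r! \cdot (2r-1)!!$. So the lemma is equivalent to the two-sided bound $(2r/e)^r < (2r-1)!! < \sqrt{2}\,(2r/e)^r$.

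The cleanest route is to invoke explicit (Robbins-type) forms of Stirling's formula: for all integers $m\geq 1$,
\[
  \sqrt{2\pi m}\left(\frac{m}{e}\right)^m < m! < \sqrt{2\pi m}\left(\frac{m}{e}\right)^m e^{1/(12m)}.
\]
Applying this to both $(2r)!$ and $r!$ and forming the quotient $(2r)!/(2^r r!)$, the powers of $e$ combine to give a clean leading term $\sqrt{2}\,(2r/e)^r$ times a correction factor that I would bound above by $1$ (for the upper bound) and below by something exceeding $e^{-r}\cdot(\text{stuff})$... actually one must be careful here: the naive quotient gives leading order $\sqrt{2}(2r/e)^r$, so the upper bound $\sqrt{2}(2r/e)^r$ is essentially sharp and follows once the exponential correction terms are checked to have the right sign; the lower bound $(2r/e)^r$ has a comfortable slack factor of $\sqrt{2}$, so it should follow easily. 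Concretely, from the quotient one gets
\[
  \frac{(2r)!}{2^r r!} = \sqrt{2}\left(\frac{2r}{e}\right)^r \cdot R_r,
\]
where $R_r$ is a ratio of exponential corrections; I would verify $e^{-1/(12\cdot 2r)} < R_r \cdot(\text{sign conventions}) $, i.e. that $R_r \in (1/\sqrt{2},\,1)$ for all $r\geq 1$, which is a short monotonicity check.

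An alternative, more self-contained approach avoids Stirling entirely: set $a_r = (2r-1)!!\,/\,(2r/e)^r$ and show by examining the ratio $a_{r+1}/a_r = (2r+1)\big/\big(e\,(1+1/r)^r\,(2+2/r)\big)$ — using $(1+1/r)^r < e$ and bounding — that $a_r$ is increasing, with $a_1 = e/2 \approx 1.359 \in (1,\sqrt 2)$ and $\lim a_r = \sqrt{2}$ (the latter being exactly Stirling). The base case handles the lower bound for all $r$; for the upper bound one checks $a_r < \sqrt 2$ for all $r$, which follows from monotone convergence to $\sqrt 2$ from below. The main obstacle is the bookkeeping with the exponential correction terms in the Stirling route — getting the inequalities to point the right way and confirming the correction ratio stays in $(1/\sqrt2, 1)$ — but this is routine once set up carefully; the monotonicity argument sidesteps it at the cost of needing the limit $a_r \to \sqrt 2$.
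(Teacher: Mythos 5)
Your first route is exactly the paper's proof, which consists in its entirety of the sentence ``Apply an explicit version of Stirling's formula (see, e.g., Robbins),'' so the approaches coincide. One precision worth making: the one-sided form you quote, $m!>\sqrt{2\pi m}\,(m/e)^m$, is not quite enough for the upper bound (applied to $r!$ in the denominator it only yields $(2r)!/(2^r r!)<\sqrt{2}\,(2r/e)^r e^{1/(24r)}$); you need Robbins's two-sided refinement $e^{1/(12m+1)}<m!\big/\bigl(\sqrt{2\pi m}\,(m/e)^m\bigr)<e^{1/(12m)}$, after which the correction exponent $\tfrac{1}{24r}-\tfrac{1}{12r+1}$ is negative for all $r\geq 1$ and both inequalities of the lemma follow as you describe.
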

\begin{proof}
Apply an explicit version of Stirling's formula (see, e.g., \cite{Robbins}).
\end{proof}

The first part of the following proposition is due to Trevi\~no,
following Burgess, Norton, and Booker (see~\cite{TrevinoJNT});
the second part is a refinement in a special case.

\begin{prop}\label{prop:weil}
Let $\chi$ be a non-principal Dirichlet character modulo $p$.
Let $r,h\in\Z^+$.  Then
\item
\[
  S_\chi(p,h,r)\leq \frac{(2r)!}{2^r r!}ph^r + (2r-1)p^{1/2}h^{2r}
\]
and
\begin{equation}\label{booze}
S_\chi(p,h,2)
  <
  \begin{cases}
  (3h^2-2h)p + 2(h^4-3h^2+2h)p^{1/2} & \text{ if $\ord(\chi)=2$}\\
    (2h^2-h)p + 3(h^4-2h^2+h)p^{1/2} & \text{ if $\ord(\chi)>2$}\,.
    \end{cases}
\end{equation}
\end{prop}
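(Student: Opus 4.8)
The plan is to expand $S_\chi(p,h,r)$ over tuples and separate a ``diagonal'' part (bounded by $p$ per term) from an ``off-diagonal'' part controlled by the Weil bound. Writing
\[
\Big|\sum_{n=0}^{h-1}\chi(x+n)\Big|^{2r}=\sum_{\vec n,\vec m}\chi\!\Big(\prod_{i=1}^r(x+n_i)\Big)\overline{\chi\!\Big(\prod_{j=1}^r(x+m_j)\Big)},
\]
with $\vec n,\vec m$ ranging over $\{0,\dots,h-1\}^r$, and summing over $x\in\F_p$ (terms with $x=-n_i$ or $x=-m_j$ vanish), one gets $S_\chi(p,h,r)=\sum_{\vec n,\vec m}T(\vec n,\vec m)$ where $T(\vec n,\vec m)=\sum_x\chi\big(R_{\vec n,\vec m}(x)\big)$ and $R_{\vec n,\vec m}(X)=\prod_i(X+n_i)/\prod_j(X+m_j)$. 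For the first assertion one argues as in Trevi\~no~\cite{TrevinoJNT}: $|T(\vec n,\vec m)|\le p$ always, while if $R_{\vec n,\vec m}$ is not a constant times a perfect $\ord(\chi)$-th power then Weil's bound gives $|T(\vec n,\vec m)|\le(2r-1)p^{1/2}$ (the rational function has at most $2r$ zeros and poles on $\mathbb{P}^1$, with $\infty$ excluded since numerator and denominator have equal degree), and the number of remaining pairs is at most the number of $2r$-tuples in which no value occurs exactly once, which is at most $\tfrac{(2r)!}{2^{r}r!}h^r$ by a multinomial estimate. This gives the first displayed bound.

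For the refinement \eqref{booze} take $r=2$. If $\ord(\chi)=d>2$, then in $R_{\vec n,\vec m}(X)=(X+n_1)(X+n_2)/((X+m_1)(X+m_2))$ numerator and denominator have degree $2<d$, so $R_{\vec n,\vec m}$ is a constant times a $d$-th power only when it is itself constant, i.e.\ only when $\{n_1,n_2\}=\{m_1,m_2\}$ as multisets; there are exactly $2h^2-h$ such pairs, each contributing $T=p-|\{n_1,n_2\}|<p$. For the other $h^4-2h^2+h$ pairs, $R_{\vec n,\vec m}$ has at most $4$ zeros and poles on $\mathbb{P}^1$ (none at $\infty$), so $|T|\le 3p^{1/2}$. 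Summing gives the second line of \eqref{booze}.

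If $\ord(\chi)=2$ then $\chi=\overline\chi$, and it is cleaner to expand the fourth power directly: $S_\chi(p,h,2)=\sum_{\vec n\in\{0,\dots,h-1\}^4}\sum_{x\in\F_p}\chi(F_{\vec n}(x))$ with $F_{\vec n}(X)=\prod_{i=1}^4(X+n_i)$, a monic quartic. If $F_{\vec n}$ is a perfect square in $\F_p[X]$ — equivalently the multiset $\{n_1,\dots,n_4\}$ has all multiplicities even, which occurs for exactly $3h^2-2h$ tuples — then $\sum_x\chi(F_{\vec n}(x))=p-|\{n_1,\dots,n_4\}|<p$. Otherwise $F_{\vec n}$ has a root of odd multiplicity. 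When the $n_i$ are all distinct, $F_{\vec n}$ is squarefree and $y^2=F_{\vec n}(x)$ is a smooth curve of genus $1$; counting its $\F_p$-points (including the two points at infinity, since $F_{\vec n}$ is monic) and invoking the Hasse--Weil bound yields $\sum_x\chi(F_{\vec n}(x))=-1+\theta$ with $|\theta|\le 2p^{1/2}$, whence $\sum_x\chi(F_{\vec n}(x))\le 2p^{1/2}-1<2p^{1/2}$. When $F_{\vec n}$ is not squarefree but still not a perfect square — the multiset is $\{a,a,b,c\}$ or $\{a,a,a,b\}$ — extracting the square factor reduces the sum to a character sum over a squarefree quadratic plus a bounded correction, which is $\le 2<2p^{1/2}$. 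Hence each of the $h^4-3h^2+2h$ off-diagonal tuples contributes at most $2p^{1/2}$, and summing gives the first line of \eqref{booze}. Strictness of the inequalities is automatic, as the diagonal terms are already strictly below $p$.

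The main obstacle is obtaining the coefficient $2$ rather than the $3=(\deg-1)$ that the generic Weil bound would give for the off-diagonal quartic sums: this requires the sharper, even-degree estimate coming from the genus-$1$ Hasse--Weil bound, together with the observation that, $F_{\vec n}$ being monic, the defect in that estimate is the concrete value $-1$ — a sign favorable for an upper bound — rather than a quantity merely bounded by $1$ in absolute value. The remaining work, namely the exact diagonal/off-diagonal counts and the treatment of the degenerate non-squarefree quartics, is routine.
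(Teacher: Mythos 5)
Your overall decomposition is the same as the paper's (expand into $2r$-tuples, take the trivial bound $p$ on the ``exceptional'' tuples and the Weil bound $(2r-1)p^{1/2}$ on the rest), and your treatment of the $r=2$ refinement (\ref{booze}) is correct and matches the paper: the same exceptional counts $2h^2-h$ and $3h^2-2h$, and the same use of the genus-$1$ Hasse--Weil bound to replace $3p^{1/2}$ by $2p^{1/2}$ when $\ord(\chi)=2$; you are in fact more explicit than the paper about the degenerate non-squarefree quartics, which is welcome.

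The gap is in the first inequality, for general $r$. You bound the exceptional tuples by the set of $2r$-tuples in which no value occurs exactly once, and assert that this set has size at most $\frac{(2r)!}{2^r r!}h^r$ ``by a multinomial estimate.'' That assertion is false for every $r\geq 6$ once $h$ is large. The no-singleton count equals $\sum_{k\leq r}S_k\,h(h-1)\cdots(h-k+1)$, where $S_k$ is the number of set partitions of a $2r$-set into $k$ blocks all of size at least $2$. For $r=6$ one has $S_6=11!!=10395$ and $S_5=190575$, so the count is $10395\,h^6+(190575-15\cdot 10395)h^5+O(h^4)=10395\,h^6+34650\,h^5+O(h^4)$, which exceeds $\frac{12!}{2^6\,6!}h^6=10395\,h^6$ for large $h$; the same leading-coefficient comparison shows failure for all $r\geq 6$ (while for $r\leq 5$ the inequality happens to hold). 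The relaxation from ``$R_{\mathbf m}$ is a constant times a $d$-th power'' to ``no value is a singleton'' discards too much: for $d=2$ the exceptions are exactly the tuples with all multiplicities even, which do inject into (perfect matching, value-assignment) pairs and hence number at most $(2r)!/(2^r r!)\,h^r$, but for $d>2$ one needs the exact-exception count $c_r(h,n)$ of Lemma~2.1 of \cite{TrevinoJNT}, and the inequality $c_r(h,n)\leq c_r(h,2)$ proved there requires $r\leq 9h$. The paper secures that hypothesis by first disposing of the regime $r\geq (e/2)h$, where the trivial bound $S_\chi\leq h^{2r}p$ together with Lemma~\ref{stirling} already gives the result --- a reduction your argument also omits. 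Since Theorem~\ref{T:1} is claimed for all $r\geq 2$, the first inequality is genuinely needed for $r\geq 6$, so this step must be repaired along the paper's lines.
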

\begin{proof}
Without loss of generality, we may assume that \mbox{$h<p$}. 
Additionally, we notice that if $r\geq (e/2)h$, then the proposition
is trivial since we would have
(in light of Lemma~\ref{stirling})
$$
S(\chi,h,r)\leq h^{2r}p\leq \left(\frac{2r}{e}\right)^r h^r p< \frac{(2r)!}{2^r r!}h^r p\,.
$$
Hence we may assume that $r< (e/2)h$.

To begin, we observe that
$$
  S(\chi,h,r)
  =
  \sum_{1\leq m_1,\dots,m_{2r}\leq h}\;\;
  \sum_{x=0}^{p-1}
  \chi(x+m_1)\dots\chi(x+m_r)\overline{\chi}(x+m_{r+1})\dots\overline{\chi}(x+m_{2r})
  \,.
$$
Define
$$
  \mathcal{M}:=\{\mathbf{m}=(m_1,\dots,m_{2r})\mid 1\leq m_1,\dots,m_{2r}\leq h\}
$$
and
$$
  F_\mathbf{m}(x)
  =
  (x+m_1)\dots(x+m_r)(x+m_{r+1})^{n-1}(x+m_{2r})^{n-1},
$$
where $n$ denotes the order of $\chi$.
We can then rewrite the above as
$$
  S(\chi,h,r)=
  \sum_{\mathbf{m}\in\mathcal{M}}
  \sum_{x\in\F_p}
  \chi(F_\mathbf{m}(x))
  \,.
$$
If $F_\mathbf{m}(x)$ is not an $n$-th power mod $p$,
then we can invoke the Weil bound (see, for example, Theorem~11.23 of~\cite{IK})
to obtain
\begin{equation}\label{weilbound}
  \left|
  \sum_{x\in\F_p}
  \chi(F_\mathbf{m}(x))
  \right|
  \leq
  (2r-1)\sqrt{p}
  \,.
\end{equation}
Otherwise, we accept the trivial bound of $p$.

It remains to count the number of exceptions --- that is, the number of $\mathbf{m}\in\mathcal{M}$ such that $F_\mathbf{m}(x)$ is an $n$-th power mod $p$.
If $n=2$, it is easy to see that the number of exceptions is bounded above
by $(2r-1)(2r-3)\dots(3)(1)=(2r)!/(2^r r!)$ simply by pairing each $m_j$ with a duplicate.
When $n>2$, the counting problem is much more difficult.
Trevi\~no (see Lemma 2.1 of~\cite{TrevinoJNT}) shows that the number of exceptions is bounded above by the quantity
\[
  c_r(h,n)=\sum_{d=0}^{\lfloor\frac{r}{n}\rfloor}
  \left(\frac{r!}{d!(n!)^d}\right)^2\frac{h^{r-(n-2)d}}{(r-nd)!}
  \,;
\]
moreover, under the condition $r\leq 9h$, he shows that $c_r(h,n)$
is a decreasing function of $n$ and hence $c_r(h,n)\leq c_r(h,2)=(2r)!/(2^r r!)h^r$.
But since we have $r< (e/2)h$ in the context of our proof, this condition
is automatic.

Specializing to $r=2$, our polynomial becomes
$$
  F_\mathbf{m}(x)=(x+m_1)(x+m_2)(x+m_{3})^{n-1}(x+m_4)^{n-1}
  \,.
$$
First, consider the case where $n>2$.  Then any exception must satisfy either ($m_1=m_3$, $m_2=m_4$)
\emph{or}
($m_1=m_4$, $m_2=m_3$).  Thus the number of exceptions is $2h^2-h$.  The result when $n>2$ now follows.
If $n=2$, then each $m_i$ is paired up with some other $m_j$.  Hence the number of exceptions is $3h^2-2h$.
Finally, we remark that the $2r-1$ in the estimate (\ref{weilbound}) can be improved to $2(r-1)$ when $n=2$.
This is because the genus of the curve $y^2=(x+m_1)\dots(x+m_{2r})$ is at most $r-1$.
\end{proof}

Applying Lemma~\ref{stirling} and comparing cases in (\ref{booze}) we arrive at the following result.
\begin{prop}\label{prop2}
Let $\chi$ be a non-principal Dirichlet character modulo $p$.
Let $r,h\in\Z^+$.  Then 
\[
  S_\chi(p,h,r)\leq p^{1/2}h^{2r}
  \cdot
    \begin{cases}
   \sqrt{2}\left(\frac{2r}{eh}\right)^rp^{1/2} + (2r-1) & r\geq 1\\
   3\left(1+\frac{p^{1/2}}{h^2}\right) & r=2\,.
   \end{cases}  
\]
\end{prop}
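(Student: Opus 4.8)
My plan is to obtain both cases directly by simplifying the two parts of Proposition~\ref{prop:weil}. For the bound claimed for all $r\geq 1$, I would start from the first inequality there,
\[
  S_\chi(p,h,r)\leq \frac{(2r)!}{2^r r!}\,ph^r + (2r-1)\,p^{1/2}h^{2r},
\]
bound the factorial ratio above by $\sqrt 2\,(2r/e)^r$ using Lemma~\ref{stirling}, and then pull out a common factor of $p^{1/2}h^{2r}$. Since $h^r/h^{2r}=h^{-r}$, the first term becomes $\sqrt 2\,\bigl(2r/(eh)\bigr)^r p^{1/2}$ times $p^{1/2}h^{2r}$ and the second becomes $(2r-1)$ times $p^{1/2}h^{2r}$, which is exactly the first case.

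For the refinement at $r=2$ I would instead invoke the sharper estimate (\ref{booze}). Here $2r=4$, so the target bound is $p^{1/2}h^4\cdot 3\bigl(1+p^{1/2}/h^2\bigr)=3h^2p+3h^4p^{1/2}$. It then suffices to check, in each of the two subcases of (\ref{booze}), that the coefficient of $p$ does not exceed $3h^2$ and the coefficient of $p^{1/2}$ does not exceed $3h^4$. For $\ord(\chi)=2$ this is $3h^2-2h\leq 3h^2$ and $2(h^4-3h^2+2h)\leq 3h^4$; for $\ord(\chi)>2$ it is $2h^2-h\leq 3h^2$ and $3(h^4-2h^2+h)\leq 3h^4$. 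All four are immediate for every $h\in\Z^+$, since in each case the correction term added to the leading term ($-2h$, $-6h^2+4h$, $-h$, $-6h^2+3h$, respectively) is negative. Hence both subcases are majorized by the single expression above, giving the second case.

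I do not expect a genuine obstacle: this proposition is pure bookkeeping once Proposition~\ref{prop:weil} and Lemma~\ref{stirling} are in hand. The only things to be careful about are tracking the exponents of $h$ and $p$ correctly when factoring out $p^{1/2}h^{2r}$, and recognizing that the $r=2$ estimate has to absorb both the $\ord(\chi)=2$ and $\ord(\chi)>2$ branches of (\ref{booze}) simultaneously, which is why one replaces them by the common, slightly weaker majorant $3h^2p+3h^4p^{1/2}$.
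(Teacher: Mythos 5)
Your proposal is correct and is exactly the paper's (essentially one-line) derivation: the paper simply says ``Applying Lemma~\ref{stirling} and comparing cases in (\ref{booze})'', which is precisely your factoring out of $p^{1/2}h^{2r}$ in the first case and your common majorant $3h^2p+3h^4p^{1/2}$ absorbing both branches of (\ref{booze}) in the second. All four coefficient comparisons check out for every $h\geq 1$, so there is nothing to add.
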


\subsection{A collection of intervals}

The collection of intervals given in the following proposition is a variation on those that
appear in a 1957 paper of Burgess (see~\cite{Burgess57}) and are standard
in the study of explicit bounds for character non-residues
(see, for example,~\cite{Norton71, Norton73, McGown2, TrevinoJNT}).
Explicit upper and lower bounds on the number of integer points
in these intervals will be essential for our results.

\begin{prop}\label{intervals}
Let $H\in(0,p)$ be a real number, $h\geq 2$ be an integer,
and set $X=H/h$.  For $0\leq t<q\leq X$, $(t,q)=1$,
define the intervals
\begin{align*}
\mathcal{I}(q,t)&=
\left(
\frac{tp}{q},\,\frac{tp+H}{q}-h+1
\right]
\,,
\\
\mathcal{J}(q,t)&=
\left[
\frac{tp-H}{q},\,\frac{tp}{q}-h+1
\right)
\,.
\end{align*}
Suppose $X\geq 2$ and $2HX<p$.
\begin{enumerate}
\item
The intervals
$\mathcal{I}(q,t)$, $\mathcal{J}(q,t)$
are disjoint subsets of $[-H,p-H)$.
\item 
Suppose $0\leq n\leq h-1$.
If $z\in\mathcal{I}(q,t)$ then $q(z+n)-pt\in(0,H]$,
and if $z\in\mathcal{J}(q,t)$ then $q(z+n)-pt\in[-H,0)$.
\item
The number of integer points in all the intervals
\[
N(X)=
\sum_{\substack{0\leq t<q\leq X\\(t,q)=1}}\sum_{z\in\mathcal{I}(q,t)\cup\mathcal{J}(q,t)}1
\]
satisfies
\[
  A(X)\frac{6}{\pi^2}X^2h\leq N(X)\leq B(X)\frac{6}{\pi^2}X^2h
\]
where
\begin{equation}\label{duke}
  A(X)=\left(1-\frac{2\pi^2}{9X}\right)
  \,,\quad
  B(X)=\left(1+\frac{2\pi^2}{9X}+\frac{1}{h}+\frac{\pi^2}{3h}\frac{\log X}{X}\right)
  \,.
\end{equation}
\end{enumerate}
\end{prop}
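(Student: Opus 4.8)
The plan is to treat the three assertions in order, as each builds on the previous one. For part~(1), I would first observe that each interval $\mathcal{I}(q,t)$ and $\mathcal{J}(q,t)$ lies inside $[-H, p-H)$: the left endpoint $tp/q$ of $\mathcal{I}(q,t)$ is nonnegative and at most $(q-1)p/q < p-H$ (using $H<p$ and $t \le q-1$), while its right endpoint $(tp+H)/q - h + 1$ is at most $tp/q + H/q \le (q-1)p/q + H \le p$; a similar one-line check handles $\mathcal{J}(q,t)$. For disjointness, the key algebraic fact is that two distinct fractions $t/q$ and $t'/q'$ with $q,q' \le X$ satisfy $|t/q - t'/q'| \ge 1/(qq') \ge 1/X^2$, so the corresponding intervals — each of length roughly $H/q \le H/1$ but actually of total ``$p$-scaled'' width $< H/q$ — are separated by at least $p/X^2 > 2H \ge$ (sum of the two interval lengths), using the hypothesis $2HX < p$ to convert the $1/X^2$ gap into enough room. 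One must be slightly careful to also rule out $\mathcal{I}$ overlapping $\mathcal{J}$ for the \emph{same} $(q,t)$, which is immediate since one sits to the right of $tp/q$ and the other to the left.

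For part~(2), this is a direct computation: if $z \in \mathcal{I}(q,t)$ then $tp/q < z \le (tp+H)/q - h + 1$, so multiplying by $q$ gives $tp < qz \le tp + H - q(h-1)$, hence $0 < qz - tp \le H - q(h-1)$, and adding $qn$ with $0 \le n \le h-1$ keeps us in $(0, H]$ since $qn \le q(h-1)$. The argument for $\mathcal{J}(q,t)$ is the mirror image. I expect this to be the easiest of the three.

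For part~(3), the count $N(X)$ is $\sum_{q\le X}\sum_{(t,q)=1} \#\{z \in \mathbb{Z} : z \in \mathcal{I}(q,t) \cup \mathcal{J}(q,t)\}$. Each of $\mathcal{I}(q,t)$, $\mathcal{J}(q,t)$ has length exactly $H/q - h + 1 = hX/q - h + 1 = h(X/q - 1) + 1$ (using $H = hX$), so the number of integers in a half-open interval of this length is $h(X/q - 1) + O(1)$; more precisely it lies between $h(X/q-1)$ and $h(X/q-1)+1$ once one is careful about the half-open endpoints (and one should check the length is positive, which needs $q < X$, while the boundary case $q$ near $X$ contributes little). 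Summing the main term over the $\phi(q)$ admissible residues $t$ and over $q \le X$ gives $2\sum_{q\le X}\phi(q)\big(h(X/q-1)+O(1)\big) = 2h\sum_{q \le X}\phi(q)(X/q) - 2h\sum_{q\le X}\phi(q) + O\big(\sum_{q\le X}\phi(q)\big)$. Now I would invoke the standard elementary estimates $\sum_{q\le X}\phi(q) = \frac{3}{\pi^2}X^2 + O(X\log X)$ and $\sum_{q \le X}\phi(q)/q = \frac{6}{\pi^2}X + O(\log X)$; the leading term becomes $2h \cdot X \cdot \frac{6}{\pi^2}X = \frac{12}{\pi^2}X^2 h$, but one factor of $2$ must be reconciled with the claimed $\frac{6}{\pi^2}X^2h$ — so in fact each interval contributes $\frac{3}{\pi^2}X^2 h$ and the pair contributes $\frac{6}{\pi^2}X^2 h$; the discrepancy is resolved by noting $\sum\phi(q)/q \sim \frac{6}{\pi^2}X$ already has the $\pi^2/6$, and the main term of $N(X)$ is $h X \sum_{q\le X}\phi(q)/q \approx h X \cdot \frac{6}{\pi^2}X$, giving per-interval $\tfrac{1}{2}$ of the displayed $\frac{6}{\pi^2}X^2 h$ — this bookkeeping is the part I would do most carefully. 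The explicit error terms in $A(X)$ and $B(X)$ then come from making the $O$-terms above fully explicit: the $\frac{2\pi^2}{9X}$ pieces from explicit versions of the two Euler-totient sums, the $\frac{1}{h}$ from the per-interval rounding ($+1$ per interval, i.e.\ $+\frac{6}{\pi^2}X^2$ total, which is a $\frac{1}{h}$ relative correction after one also accounts for the $-h\sum\phi(q)$ term), and the $\frac{\pi^2}{3h}\frac{\log X}{X}$ from the $O(X\log X)$ error in $\sum_{q\le X}\phi(q)$ feeding into the ``$-h\sum\phi(q)$'' term.

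The main obstacle will be getting \emph{explicit} constants in the totient-sum estimates that are tight enough to yield exactly the coefficients $\frac{2\pi^2}{9}$, $\frac{1}{h}$, and $\frac{\pi^2}{3h}$ appearing in~(\ref{duke}), and correctly tracking which error contributes to the upper bound $B(X)$ versus the lower bound $A(X)$ (the lower bound only needs the $\frac{2\pi^2}{9X}$ loss since the rounding and the $\log X$ term only help). Keeping the half-open interval conventions consistent with part~(1) — so that the same integer is never double-counted across two intervals — is a secondary bookkeeping point that part~(1) has already secured.
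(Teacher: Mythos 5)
Parts (1) and (2) of your plan are fine and essentially match the paper's (the paper cites \cite{McGown2} and \cite{TrevinoJNT} for the disjointness, which is the Farey-spacing argument you sketch, and treats (2) as immediate from the definitions). The genuine gap is in part (3). You propose to estimate $S := X\sum_{q\le X}\phi(q)/q-\sum_{q\le X}\phi(q)$ by inserting the two standard asymptotics $\sum_{q\le X}\phi(q)=\tfrac{3}{\pi^2}X^2+O(X\log X)$ and $\sum_{q\le X}\phi(q)/q=\tfrac{6}{\pi^2}X+O(\log X)$ separately. That yields only $S=\tfrac{3}{\pi^2}X^2+O(X\log X)$, and since $N(X)\ge 2hS$, your \emph{lower} bound would then have the shape $\tfrac{6}{\pi^2}X^2h\bigl(1-c\tfrac{\log X}{X}\bigr)$: you cannot recover the log-free term $A(X)=1-\tfrac{2\pi^2}{9X}$ this way. (Your parenthetical claim that the lower bound only needs the $\tfrac{2\pi^2}{9X}$ loss because the $\log X$ term ``only helps'' is exactly where this breaks: the $O(X\log X)$ error in $\sum_{q\le X}\phi(q)$ is two-sided and pollutes the lower bound too.) The paper avoids this by expanding $S$ \emph{as a single quantity} via $\phi(q)/q=\sum_{d\mid q}\mu(d)/d$ and $\phi(q)=\sum_{d\mid q}\mu(d)q/d$: the dangerous contributions $X\sum_{d\le X}\tfrac{\mu(d)}{d}\{X/d\}$ arising from the two pieces cancel, leaving (equation (\ref{meat})) an error controlled by $\sum_{d>X}\mu(d)/d^2$, $\sum_{d\le X}\mu(d)/d$ and $\sum_{d\le X}\mu^2(d)$, which explicit results of Ramar\'e and Cipu bound by $\tfrac23 X$ in total. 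Only the upper bound requires $T=\sum_{q\le X}\phi(q)$ on its own, and that is where the $X\log X$ error --- hence the $\tfrac{\pi^2}{3h}\tfrac{\log X}{X}$ term in $B(X)$ --- legitimately enters.

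A secondary point: your factor-of-two worry in part (3) stems from treating $-2h\sum_{q\le X}\phi(q)$ as a small correction. It is not: it equals $-\tfrac{6}{\pi^2}X^2h$ to leading order, i.e.\ half the size of $2hX\sum_{q\le X}\phi(q)/q\approx\tfrac{12}{\pi^2}X^2h$, and subtracting it is precisely what produces the main term $\tfrac{6}{\pi^2}X^2h$. Writing the count in the sandwich form $2hS\le N(X)\le 2hS+2T$ from the start resolves the bookkeeping and also makes clear why $S$ must be handled as a unit.
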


\begin{proof}
Using the assumption $2HX<p$, one can show that for $0\leq t<q\leq X$, $(t,q)=1$,
the intervals
\[
\left[(tp-H)q^{-1}, (tp+H)q^{-1}\right]
\]
are disjoint subsets of $[-H, p-H)$.
(See~\cite{McGown2} or~\cite{TrevinoJNT} for more details.)
The first claim of the proposition holds;
the second is true by definition.
We turn to the third claim.
Each interval above contains at least $H/q-h$ integers,
and at most $H/q-h+1$ integers.
For the lower bound we have
\[
N(X)\geq
2\sum_{\substack{0\leq t<q\leq X\\(t,q)=1}}
\left(\frac{H}{q}-h\right)
=
2h\left(X\sum_{1\leq q\leq X}\frac{\phi(q)}{q}-\sum_{1\leq q\leq X}\phi(q)\right)
\,,
\]
and for the upper bound we have
\[
N(X)\leq 2\sum_{\substack{0\leq t<q\leq X\\(t,q)=1}}
\left(\frac{H}{q}-h+1\right)
=
2h\left(X\sum_{1\leq q\leq X}\frac{\phi(q)}{q}-\sum_{1\leq q\leq X}\phi(q)\right)
+2\sum_{1\leq q\leq X}\phi(q)
\,.
\]
For ease of notation, write
\[
  S=X \sum_{q\leq X} \frac{\phi(q)}{q} - \sum_{q \leq X} \phi(q)
  \,,\;
  T= \sum_{q \leq X} \phi(q)
  \,,
\]
so that
\[
  2hS\leq N(X)\leq 2hS+2T
  \,.
\]
We now estimate $S$ and $T$ using the method in the proof of Lemma~3.1 of~\cite{TrevinoJNT}.
We can transform the series in an elementary way, arriving, as in \cite[p.\ 208]{TrevinoJNT} at
\begin{equation}\label{meat}
S = \frac{3}{\pi^{2}} X^{2} + \vartheta\frac{X^{2}}{2} \sum_{d>X} \frac{\mu(d)}{d^{2}} + \vartheta \frac{X}{2} \sum_{d\leq X} \frac{\mu(d)}{d} + \vartheta \frac{1}{8} \sum_{d\leq X} \mu^{2}(d),
\end{equation}
where we write $f(X) = \vartheta g(X)$ to denote $|f(X)| \leq |g(X)|$. We now aim at bounding each of the sums in (\ref{meat}), which we denote by $S_1$, $S_2$, $S_3$.




We have $|S_{1}| \leq X^{-1}$ by Claim 3.1 in \cite{TrevinoJNT}. This could be improved, but will suffice for our purposes.

The bound $|S_{2}| \leq 2/3 + 3/X$ appears in Claim 3.3 in \cite{TrevinoJNT}. We can make a cheap improvement courtesy of a result of Ramar\'{e} \cite{Ram} that gives $|S_{2}| \leq 1/10$ for all $X\geq 7$. Extending this via a very quick check, we find that
$$|S_{2}| \leq \frac{1}{10} + \frac{2}{X}, \quad (X \geq 1).$$


To estimate $S_{3}$ we use directly Lemma 4.2 in Cipu \cite{Cipu}, which gives
\begin{equation}\label{E:cipu}
|S_{3}|\leq 6\pi^{-2}X + 0.679091\sqrt{X}, \quad (X\geq 1)\,.
\end{equation}
Putting this together we have
\begin{equation}\label{binge} 
S- \frac{3}{\pi^{2}}X^{2} = \vartheta\left\{ X\left( \frac{1}{2} + \frac{1}{20} + \frac{3}{4\pi^{2}}\right) + 1 + \frac{0.68 \sqrt{X}}{8}\right\}.
\end{equation}

From (\ref{binge}) it is easy to show that $|S - 3\pi^{-2} X^{2}|\leq (2/3) X$ for all $X\geq 38$. A quick computational check establishes that this is also true for $1\leq X < 38$.

We can play the same game with $T$; namely, we have
$$T = \frac{3}{\pi^{2}}X^2 -\frac{X^2}{2} \sum_{d>X} \frac{\mu(d)}{d^{2}} + \frac{X}{2} \sum_{d\leq X} \frac{\mu(d)}{d} - X \sum_{d\leq X} \frac{\mu(d)}{d} \left\{ \frac{X}{d}\right\} + \frac{1}{2} \sum_{d\leq X} \mu(d) \left( \left\{ \frac{X}{d} \right\}^{2} - \left\{ \frac{X}{d}\right\}\right).$$
Call these sums $T_{1}, T_{2}, T_{3}, T_{4}$. All but $T_{3}$ are estimated as before. For $T_{3}$ we have
$$|T_{3}| \leq \sum_{d\leq X} \frac{\mu^{2}(d)}{d}.$$
Trevi\~{n}o \cite{TrevinoJNT} gives a bound on this between his equations (15) and (16).
We can do a little better with~(\ref{E:cipu}) using partial summation;
namely $|T_3|\leq 6\pi^{-2}\log X+2$.
Putting all this together we have
$$ T- \frac{3}{\pi^{2}}X^{2} = \vartheta\left\{ \frac{6}{\pi^{2}} X \log X + X\left( \frac{1}{2} + \frac{1}{20} + 2 + \frac{3}{4\pi^{2}}\right) + \frac{0.68\sqrt{X}}{8} + 1 \right\}.$$

We find that $|T- 3\pi^{-2}X^2| \leq X \log X$ for $X\geq 1000$. A quick computational check establishes this inequality for all $X\geq 2$.
This establishes (\ref{duke}) and proves the proposition.
\end{proof}

\subsection{The sieve}

We will make use of the same sieve employed in~\cite{Cohen}
in the form of the following result.

\begin{prop}\label{propsieve}
Let $e$ be an even divisor of $p-1$ and
let $p_1,\dots,p_s$ denote the primes dividing $p-1$ that do not divide $e$.
Set $\delta=1-\sum_{i=1}^sp_i^{-1}$.  Assume $\delta>0$.
Then we have
\begin{eqnarray*}
\frac{f(n)}{\delta\theta(e)}
\geq
1
+\frac{1}{\delta}\sum_{i=1}^s
\theta(p_i)
\sum_{\substack{d|e}}
\frac{\mu(p_i d)}{\phi(p_i d)}
\sum_{\substack{\chi\\ \ord(\chi) = p_{i}d}}
\chi(n)
+
\sum_{\substack{d|e\\d>1}}
\frac{\mu(d)}{\phi(d)}
\sum_{\substack{\chi\\ \ord(\chi) = d}}
\chi(n)
\,.
\end{eqnarray*}
\end{prop}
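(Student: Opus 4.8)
The plan is to reduce the stated inequality to an elementary Bonferroni-type estimate for $\{0,1\}$-valued functions; throughout we take $n$ coprime to $p$, which is implicit in the statement and all that is used. For a divisor $m\mid p-1$ set
\[
  f_m(n)=\theta(m)\sum_{d\mid m}\frac{\mu(d)}{\phi(d)}\sum_{\ord(\chi)=d}\chi(n),
\]
so that $f=f_{p-1}$ by the Landau formula of \S\ref{intro} (recall $\mu(d)=0$ off squarefrees and $\theta(m)$ depends only on the radical of $m$). The first step is the multiplicative law $f_{m_1m_2}=f_{m_1}f_{m_2}$ for $(m_1,m_2)=1$: in the cyclic group of Dirichlet characters modulo $p$, the characters of squarefree order $d=d_1d_2$ with $(d_1,d_2)=1$ are precisely the products $\chi_1\chi_2$ with $\ord(\chi_i)=d_i$, so the inner character sum, the factor $\mu(d)/\phi(d)$, and $\theta(m)$ all factor. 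Evaluating the one-prime case from the fact that $\sum_{\chi^q=1}\chi(n)$ equals $q$ if $n$ is a $q$-th power residue modulo $p$ and $0$ otherwise, one gets $f_q(n)\in\{0,1\}$, equal to $1$ exactly when $n$ is a $q$-th power non-residue. Hence $f_m(n)=\prod_{q\mid m}f_q(n)\in\{0,1\}$; in particular $f_m(n)\ge0$, and $f=f_e\prod_{i=1}^s f_{p_i}$, since the $p_i$ are precisely the primes dividing $p-1$ but not $e$.

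Next I would rewrite the right-hand side of the proposition in terms of the $f_m$. As $p_i\nmid e$, the squarefree divisors of $p-1$ of the form $p_id$ with $d\mid e$, together with the squarefree divisors of $e$, are exactly the squarefree divisors of $\mathrm{rad}(p_ie)$; splitting the defining sum for $f_{p_ie}$ along this partition and using $\theta(p_ie)=\theta(p_i)\theta(e)$ and $f_{p_ie}=f_{p_i}f_e$ yields
\[
  \theta(p_i)\sum_{d\mid e}\frac{\mu(p_id)}{\phi(p_id)}\sum_{\ord(\chi)=p_id}\chi(n)
  =\frac{f_{p_ie}(n)-\theta(p_i)f_e(n)}{\theta(e)}
  =\frac{f_e(n)}{\theta(e)}\bigl(f_{p_i}(n)-\theta(p_i)\bigr).
\]
Similarly $1+\sum_{d\mid e,\,d>1}\frac{\mu(d)}{\phi(d)}\sum_{\ord(\chi)=d}\chi(n)=f_e(n)/\theta(e)$. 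Substituting these into the proposition's right-hand side turns it into
\[
  \frac{f_e(n)}{\theta(e)}\left(1+\frac1\delta\sum_{i=1}^s\bigl(f_{p_i}(n)-\theta(p_i)\bigr)\right),
\]
and since $\sum_{i=1}^s\theta(p_i)=s-\sum_{i=1}^s p_i^{-1}=s-1+\delta$ the bracket collapses to $\delta^{-1}\bigl(\sum_{i=1}^s f_{p_i}(n)-(s-1)\bigr)$. Thus, after multiplying through by $\delta\theta(e)>0$, the claim is equivalent to
\[
  f(n)\ \ge\ f_e(n)\Bigl(\sum_{i=1}^s f_{p_i}(n)-(s-1)\Bigr).
\]

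To finish, I would substitute $f(n)=f_e(n)\prod_{i=1}^s f_{p_i}(n)$ and use $f_e(n)\ge0$ to reduce to $\prod_{i=1}^s f_{p_i}(n)\ge\sum_{i=1}^s f_{p_i}(n)-(s-1)$: since each $f_{p_i}(n)\in\{0,1\}$, both sides equal $1$ when all of them are $1$, and otherwise the left side is $0$ while the right side is $\le 0$. The hypothesis $\delta>0$ enters only in passing between the displayed equivalent form and the stated one. I do not anticipate a genuine obstacle; the entire content is the identification of the character-sum expression with $\frac{f_e(n)}{\delta\theta(e)}\bigl(\sum_i f_{p_i}(n)-(s-1)\bigr)$, and the only point requiring real care is the multiplicativity $f_{m_1m_2}=f_{m_1}f_{m_2}$, which must be derived from the structure of the cyclic group of Dirichlet characters modulo $p$ rather than assumed.
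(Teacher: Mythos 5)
Your proposal is correct and follows essentially the same route as the paper, which defines the $e$-free indicators $f_e$, uses the sieve inequality $f_{p-1}(n)\geq\sum_{i=1}^s\bigl(f_{p_ie}(n)-\theta(p_i)f_e(n)\bigr)+\delta f_e(n)$ together with the identity expressing $f_{p_ie}-\theta(p_i)f_e$ as a character sum, and defers the verification to the reference; your reduction to $f\geq f_e\bigl(\sum_i f_{p_i}-(s-1)\bigr)$ is exactly that inequality after expanding $\sum_i\theta(p_i)=s-1+\delta$. You merely supply the details (multiplicativity of $f_m$ and the Bonferroni step) that the paper leaves to the cited source, so there is nothing to correct.
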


\begin{proof}
We say that $n$ is $e$-free if
the equation $y^d\equiv n\pmod{p}$ is insoluble for all divisors $d$ of $e$ with $d>1$.  An integer is a primitive root
if and only if it is $(p-1)$-free.
We define the function
$$
 f_e(n)=
 \begin{cases}
 1 &\text{ if $n$ is $e$-free}\\
 0 & \text{ otherwise}.\\
 \end{cases}
$$
We have the following equation
$$
 f_e(n)/\theta(e)=
 1+\sum_{\substack{d|e\\d>1}}
 \frac{\mu(d)}{\phi(d)}
 \sum_{\substack{\chi\\ \ord(\chi) = d}}
 \chi(n).
$$
One verifies that
\[
  f_{p-1}(n)\geq
  \sum_{i=1}^s(f_{p_i e}(n)-\theta(p_i)f_e(n))
  +\delta f_e(n)
\]
and
\[
  f_{p_i e}(n)-\theta(p_i)f_e(n)
  =
  \theta(p_i e)\sum_{d|e}
  \frac{\mu(p_i d)}{\phi(p_i d)}
  \sum_{\ord(\chi)=p_i d}
  \chi(n)
\]
which leads to
the desired conclusion.
See~\cite{MTT} for the details.
\end{proof}




\section{Main theorem}\label{queen}
We now come to our main result, from which Theorem \ref{T:1} follows.
Given $r,h\geq 1$,
suppose that for all non-principal characters modulo~$p$
we have
\[
\sum_{x\in\F_p} \left|\sum_{n=0}^{h-1} \chi(x+n)\right|^{2r}\leq
W(p,h,r)p^{1/2}h^{2r}
\,.
\]

\begin{theorem}\label{T:3}
Let $p$ be an odd prime.
Let $e$ be an even divisor of $p-1$ and let $p_1,\dots,p_s$ be the primes dividing $p-1$ that do not divide $e$.
Set $\delta=1-\sum_{i=1}^sp_i^{-1}$.  Suppose $\delta>0$.
Let $r,h\in\Z^+$ and $H>0$.
Suppose $h\geq 2$, $H\geq 2h$, and $2H^2<hp$.
Set $X=H/h$.
If
\[
  \frac{\pi^2}{6}\frac{B(X)^{2r-1}}{A(X)^{2r}}
  \left(\left(2+\frac{s-1}{\delta}\right)2^{\omega-s}\right)^{2r}hp^{1/2}W(p,h,r)<H^2
  \,,
\]
then $g(p)<H$.
\end{theorem}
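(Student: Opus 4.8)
The plan is to argue by contradiction. Suppose $g(p)\geq H$; since the displayed inequality is strict and everything in it depends continuously on $H$, we may shrink $H$ slightly so that $H\notin\Z$ (this only makes the hypothesis easier to satisfy), and then $f(m)=0$ for every integer $m$ with $0<m\leq H$. The conditions $h\geq 2$, $X=H/h\geq 2$ and $2HX=2H^2/h<p$ are exactly the hypotheses of Proposition~\ref{intervals} (and the same hypotheses force $H<p$), so that proposition applies. First I would attach to each of the $N(X)$ integer points $z$ in the intervals $\mathcal{I}(q,t)\cup\mathcal{J}(q,t)$ and to each $0\leq n\leq h-1$ an integer $m$: namely $m=q(z+n)-pt$ when $z\in\mathcal{I}(q,t)$, and $m=pt-q(z+n)$ when $z\in\mathcal{J}(q,t)$. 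By Proposition~\ref{intervals}(2) we have $m\in(0,H]$ in either case, so $f(m)=0$, while $\chi(m)=\chi(\pm q)\chi(z+n)$ gives $\bigl|\sum_{n=0}^{h-1}\chi(m)\bigr|\leq\bigl|\sum_{n=0}^{h-1}\chi(z+n)\bigr|$ for every character $\chi$.

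Next I would sum the inequality of Proposition~\ref{propsieve}, applied at each such $m$, over all admissible $(q,t)$, all points $z$, and $0\leq n\leq h-1$. The left-hand side vanishes because each $f(m)=0$, so writing $E$ for the total contribution of the character sums we obtain $0\geq\delta\theta(e)\bigl(N(X)h+E\bigr)$, and since $\delta\theta(e)>0$ this yields $N(X)h\leq|E|$. Every character occurring in Proposition~\ref{propsieve} has order $>1$ (an order $d\mid e$ with $d>1$, or an order $p_id$ with $d\mid e$), hence is non-principal, so the moment bound $S_\chi(p,h,r)\leq W(p,h,r)p^{1/2}h^{2r}$ applies to it. For each such $\chi$, pulling $\chi(\pm q)$ out and applying Hölder's inequality with exponents $2r$ and $2r/(2r-1)$ to the $N(X)$-term sum $\sum_z\bigl|\sum_n\chi(z+n)\bigr|$ — using that the $N(X)$ points are pairwise incongruent modulo $p$ by Proposition~\ref{intervals}(1), so their $2r$-th moment is at most $S_\chi(p,h,r)$ — gives
\[
\Bigl|\sum_{q,t}\sum_{z}\sum_{n}\chi(m)\Bigr|\;\leq\;N(X)^{1-\frac{1}{2r}}\bigl(W(p,h,r)\,p^{1/2}h^{2r}\bigr)^{\frac{1}{2r}}.
\]
Summing these with the coefficients from the sieve, and using that there are $\phi(D)$ characters of any order $D>1$, that $p_i\nmid e$ forces $\sum_{d\mid e}|\mu(d)|=\sum_{d\mid e}|\mu(p_id)|=2^{\omega(e)}=2^{\omega-s}$, and that $\sum_{i=1}^s\theta(p_i)=s-(1-\delta)$, I find $|E|$ is at most $\bigl(\tfrac{s-1+\delta}{\delta}2^{\omega-s}+2^{\omega-s}-1\bigr)$ times the right-hand side above, and this coefficient is strictly less than $C:=\bigl(2+\tfrac{s-1}{\delta}\bigr)2^{\omega-s}$. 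Hence $N(X)h< C\,N(X)^{1-\frac{1}{2r}}\bigl(W(p,h,r)p^{1/2}h^{2r}\bigr)^{1/(2r)}$, i.e. $N(X)< C\,N(X)^{1-\frac{1}{2r}}W(p,h,r)^{1/(2r)}p^{1/(4r)}$.

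Then I would insert the two-sided estimate $\tfrac{6}{\pi^2}A(X)X^2h\leq N(X)\leq\tfrac{6}{\pi^2}B(X)X^2h$ of Proposition~\ref{intervals}(3), but asymmetrically: the single factor $N(X)$ on the left I bound below using $A(X)$ (this uses $A(X)>0$, which is built into the statement since $A(X)$ appears in a denominator; in the applications $X$ is large and $A(X)>0$ automatically), and the factor $N(X)^{1-1/(2r)}$ on the right I bound above using $B(X)$. This produces $\tfrac{6}{\pi^2}A(X)X^2h<C\bigl(\tfrac{6}{\pi^2}B(X)X^2h\bigr)^{1-1/(2r)}W(p,h,r)^{1/(2r)}p^{1/(4r)}$; raising both sides to the power $2r$, cancelling the common powers of $\tfrac{6}{\pi^2}$, $X$ and $h$, and substituting $H^2=X^2h^2$ yields $H^2<\tfrac{\pi^2}{6}\,\tfrac{B(X)^{2r-1}}{A(X)^{2r}}\,C^{2r}\,hp^{1/2}W(p,h,r)$, which contradicts the hypothesis. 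Therefore $g(p)<H$. The step I expect to be most delicate is this last one: one must use the \emph{lower} bound for one copy of $N(X)$ and the \emph{upper} bound for the other, since precisely this asymmetry creates the ratio $B(X)^{2r-1}/A(X)^{2r}$ and lets the resulting inequality coincide, after clearing $2r$-th powers, with the hypothesis. The remaining delicate but routine points are the sign bookkeeping on the $\mathcal{J}$-intervals (absorbed by the harmless factor $\chi(\pm q)$) and the exact count of the characters and their Möbius coefficients, any slack in which would spoil the constant $C$.
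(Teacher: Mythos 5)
Your proof is correct and follows essentially the same route as the paper: contradiction via the sieve of Proposition~\ref{propsieve} applied at the integers $\pm(q(z+n)-pt)\in(0,H]$ produced by Proposition~\ref{intervals}, then H\"older with exponents $2r$ and $2r/(2r-1)$, completion of the $2r$-th moment over the (pairwise incongruent) points $z$ to all of $\F_p$, and the asymmetric use of the lower bound $A(X)$ and the upper bound $B(X)$ to recover the ratio $B(X)^{2r-1}/A(X)^{2r}$. The only deviation --- harmless, and in fact slightly cleaner --- is that you apply H\"older character-by-character and then sum against the M\"obius coefficients, whereas the paper first takes the maximum over characters of each order and applies H\"older once; both yield the constant $\left(2+\frac{s-1}{\delta}\right)2^{\omega-s}$.
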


\begin{proof}
Suppose there are no primitive roots in the interval $(0,H]$.
We aim to create a contradiction.
By our hypothesis and Proposition~\ref{intervals}, for all
$z\in\mathcal{I}(q,t)$ and $0\leq n<h$ we have
$q(z+n)-pt\in(0,H]$ and hence $f(q(z+n))=0$;
similarly, for all
$z\in\mathcal{J}(q,t)$ and $0\leq n<h$, we have
$q(z+n)-pt\in[-H,0)$ and hence $f(-q(z+n))=0$.
Therefore, by Proposition~\ref{propsieve}, we have
\[
\frac{1}{\delta}\sum_{i=1}^s
\theta(p_i)
\sum_{\substack{d|e}}
\frac{\mu(p_i d)}{\phi(p_i d)}
\sum_{\substack{\chi\\ \ord(\chi) = p_{i}d}}
\chi(\pm q)
\chi(z+n)
+
\sum_{\substack{d|e\\d>1}}
\frac{\mu(d)}{\phi(d)}
\sum_{\substack{\chi\\ \ord(\chi) = d}}
\chi(\pm q)
\chi(z+n)
\leq -1.
\]
Summing this over $q,t,z,n$ we find that if we define
\begin{align*}
\mathcal{S}:=
&
\frac{1}{\delta}\sum_{i=1}^s
\theta(p_i)
\sum_{\substack{d|e}}
\frac{\mu(p_i d)}{\phi(p_i d)}
\sum_{\substack{\chi\\ \ord(\chi) = p_{i}d}}
\sum_{\substack{0\leq t<q\leq X\\(t,q)=1}}
\sum_{z\in\mathcal{I}(q,t)\cup\mathcal{J}(q,t)}
\chi(\pm q)
\sum_{n=0}^{h-1}
\chi(z+n)
\\
&
+
\sum_{\substack{d|e\\d>1}}
\frac{\mu(d)}{\phi(d)}
\sum_{\substack{\chi\\ \ord(\chi) = d}}
\sum_{\substack{0\leq t<q\leq X\\(t,q)=1}}
\sum_{z\in\mathcal{I}(q,t)\cup\mathcal{J}(q,t)}
\chi(\pm q)
\sum_{n=0}^{h-1}
\chi(z+n),
\end{align*}
we have
\[
  \mathcal{S}\leq -\sum_{\substack{0\leq t<q\leq X\\(t,q)=1}}
\sum_{z\in\mathcal{I}(q,t)\cup\mathcal{J}(q,t)}\sum_{n=0}^{h-1}
1,
\]
which implies
$|\mathcal{S}|\geq A(X)(6/\pi^2)X^2h^2$.
The goal is to give a sufficiently strong upper bound on $\mathcal{S}$
so as to create the desired contradiction.
We estimate
\begin{align*}
    |\mathcal{S}|
    &\leq
\frac{1}{\delta}\sum_{i=1}^s
\theta(p_i)
\sum_{\substack{d|e}}^\flat
\sum_{\substack{0\leq t<q\leq X\\(t,q)=1}}
\sum_{z\in\mathcal{I}(q,t)\cup\mathcal{J}(q,t)}
\max_{\ord(\chi)=p_i d}\left|\sum_{n=0}^{h-1}\chi(z+n)\right|
\\
&
\quad
+
\sum_{\substack{d|e\\d>1}}^\flat
\sum_{\substack{0\leq t<q\leq X\\(t,q)=1}}
\sum_{z\in\mathcal{I}(q,t)\cup\mathcal{J}(q,t)}
\max_{\ord(\chi)=d}\left|\sum_{n=0}^{h-1}\chi(z+n)\right|
\\
&
\leq
\left(1+\frac{1}{\delta}\sum_{i=1}^s\theta(p_i)\right)\sum_{d|e}^\flat
\sum_{\substack{0\leq t<q\leq X\\(t,q)=1}}
\sum_{z\in\mathcal{I}(q,t)\cup\mathcal{J}(q,t)}
\max_{\ord(\chi)\neq 1}\left|\sum_{n=0}^{h-1}\chi(z+n)\right|
\\
&
\leq\left(2+\frac{s-1}{\delta}\right)2^{\omega-s}
\sum_{\substack{0\leq t<q\leq X\\(t,q)=1}}
\sum_{z\in\mathcal{I}(q,t)\cup\mathcal{J}(q,t)}
\max_{\ord(\chi)\neq 1}\left|\sum_{n=0}^{h-1}\chi(z+n)\right|
\,.
\end{align*}

Using H\"older's inequality, we find
\begin{align*}
    |\mathcal{S}|\leq
    \left(2+\frac{s-1}{\delta}\right)2^{\omega-s}
&\left(
\sum_{\substack{0\leq t<q\leq X\\(t,q)=1}}
\sum_{z\in\mathcal{I}(q,t)\cup\mathcal{J}(q,t)}
\max_{\ord(\chi)\neq 1}\left|\sum_{n=0}^{h-1}\chi(z+n)\right|^{2r}
\right)^{\frac{1}{2r}}
\\
&
\times
    \left(
\sum_{\substack{0\leq t<q\leq X\\(t,q)=1}}
\sum_{z\in\mathcal{I}(q,t)\cup\mathcal{J}(q,t)}
1
\right)^{1-\frac{1}{2r}}.
\end{align*}
Using the fact that the intervals 
in question are disjoint, we can complete the character sum
and invoke the Weil bound.  This yields
\begin{align*}
\sum_{\substack{0\leq t<q\leq X\\(t,q)=1}}
\sum_{z\in\mathcal{I}(q,t)\cup\mathcal{J}(q,t)}
\left|
\sum_{n=0}^{h-1}
\chi(z+n)
\right|^{2r}
\leq
\sum_{x\in\mathbb{F}_p}
\left|
\sum_{n=0}^{h-1}
\chi(x+n)
\right|^{2r}
\leq
W(p,h,r)p^{1/2}h^{2r}.
\end{align*}
Dealing with the second factor, we have
$\sum_{t,q}
\sum_{z}
1
\leq
B(X)(6/\pi^2)X^2 h
$.
Putting this together, we have shown that
\begin{align*}
  |\mathcal{S}|
  &\leq
  \left(2+\frac{s-1}{\delta}\right)2^{\omega-s}
  \left(W(p,h,r)p^{1/2}h^{2r}\right)^{\frac{1}{2r}}
  \left(B(X)\frac{6}{\pi^2}X^2h\right)^{1-\frac{1}{2r}}.
  \end{align*}
Recall that we need to show the above is less than $A(X)(6/\pi^2)X^2h^2$.
After raising everything to power of $2r$ and simplifying, we find
the following condition suffices:
\[
  B(X)^{2r-1}\left(\left(2+\frac{s-1}{\delta}\right)2^{\omega-s}\right)^{2r}
  W(p,h,r)p^{1/2}
  <
  A(X)^{2r}\frac{6}{\pi^2}X^2 h.  
\]
Substituting $X=H/h$ and isolating the $H^2$ term yields
the condition in the statement of the theorem.
\end{proof}

\section{Consequences of Theorem \ref{T:3}}\label{prince}

First we establish the two corollaries stated in \S\ref{intro}.

\begin{proof}[Proof of Corollary~\ref{T:2}]
Set $H=p^{5/8}$ and $h=\lceil 2p^{1/4}\rceil$
so that  $2H^2<hp$.
We will apply Theorem \ref{T:3} with $r=2$.
Assuming $p\geq 10^{20}$,
we have $X=H/h\geq 10^7$ and $h\geq 2\cdot 10^5$.
We have
  \[  
    W(p,h,2)\leq 3\left(1+\frac{p^{1/2}}{h^2}\right)\leq\frac{15}{4}
    \,,
  \]
  and one verifies that $A(X)\geq 1-10^{-6}$ and $B(X)\leq 1+10^{-5}$.
  Consequently, one finds that $g(p)<p^{5/8}$ provided
  \begin{equation}\label{test}
    13\left(\left(2+\frac{s-1}{\delta}\right)2^{\omega-s}\right)^4<p^{1/2}
    \,.
  \end{equation}
  
  When $\omega\leq 8$, condition (\ref{test}) holds trivially using $s=0$ (and hence $\delta=1$).
  When $9\leq\omega\leq 17$, we set $s=\omega-3$ and note that
  $\delta\geq 1-\sum_{i=3}^\omega q_i^{-1}$ where $q_i$ denotes the $i$-th prime;
  in this case, one verifies that
  the left-hand side of (\ref{test}) is less than $10^{11}\leq p^{1/2}$.
  When $18\leq\omega\leq 50$, we again set $s=\omega-3$ and combine
  with the fact that $p>\prod_{i=1}^\omega q_i$ to verify that (\ref{test}) holds.
  
  When $p\geq 10^{1000}$, we can use the bound
  $\omega(p-1)\leq 1.39\log p/\log\log p$ (see~\cite{Robin}) to verify that (\ref{test}) holds
  when $s=0$.  Hence we may assume that $p\leq 10^{1000}$ which implies that
  $\omega\leq 200$.  For the remaining range $50<\omega<200$,
  the choice of $s=\omega-5$ works.
\end{proof}

\begin{proof}[Proof of Corollary~\ref{lonely}]
Set $H=(0.999)p^{1/2}$.  Set $h=\lceil p^{1/4}\rceil$.  The condition
from Theorem~\ref{T:3} becomes 
\[
  7\left(\left(2+\frac{s-1}{\delta}\right)2^{\omega-s}\right)^4 < p^{1/4}\,.
\]
The result now follows from an analysis similar to the proof of Corollary \ref{T:2}.
\end{proof}

In light of Corollary~\ref{lonely}, to prove Theorem~\ref{T:1} it suffices to establish the following intermediary result.
We have chosen to record this result separately as it could be used to improve the range of $p$ in
which Theorem~\ref{T:1} holds.

\begin{theorem}\label{T:Win}
Suppose $p\geq 10^{15}$.  Let $r\geq 2$ be an integer. 
If
\[
g(p)<p^{\frac{1}{2}+\frac{1}{4r}}
\,,
\]
then
\[
  g(p)<2r\, 2^{r\omega(p-1)} p^{\frac{1}{4}+\frac{1}{4r}}
  \,.
\]
\end{theorem}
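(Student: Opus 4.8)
The plan is to split into two cases according to whether $2r\,2^{r\omega}\ge p^{1/4}$ or not. If $2r\,2^{r\omega}\ge p^{1/4}$, then
\[
  2r\,2^{r\omega}\,p^{\frac14+\frac1{4r}}\ \ge\ p^{\frac14}\cdot p^{\frac14+\frac1{4r}}\ =\ p^{\frac12+\frac1{4r}}\ >\ g(p)
\]
by the hypothesis, and there is nothing more to prove. So for the rest we may assume $2r\,2^{r\omega}<p^{1/4}$. Since $p-1$ is even, $\omega\ge1$, so this assumption already forces $p>(2r\cdot 2^{r})^{4}=16^{\,r+1}r^{4}$ --- a lower bound on $p$ that grows with $r$ and will be essential for keeping error terms small.

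In this remaining case I would apply Theorem~\ref{T:3} with the even divisor taken to be $p-1$ itself (so $s=0$, $\delta=1$), with the same $r$, with $H=2r\,2^{r\omega}\,p^{\frac14+\frac1{4r}}$, and with $h=\lceil c_{r}\,p^{1/(2r)}\rceil$ for a suitable explicit constant $c_{r}$ --- one may take $c_{r}=2r/\mathrm{e}$ for $r\ge3$ (with $\mathrm{e}=2.71828\ldots$) and $c_{r}=2$ for $r=2$. The requirements $h\ge2$ and $H\ge2h$ are immediate for $p\ge10^{15}$, and $2H^{2}<hp$ follows from the (squared) case assumption $4r^{2}2^{2r\omega}<p^{1/2}$ together with $h\ge2p^{1/(2r)}$. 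Since $s=0$, the factor $\bigl((2+\tfrac{s-1}{\delta})2^{\omega-s}\bigr)^{2r}$ equals $2^{2r\omega}$, which is exactly the power of two sitting in $H^{2}=4r^{2}\,2^{2r\omega}\,p^{\frac12+\frac1{2r}}$; cancelling it, along with a factor $p^{1/2}$, the hypothesis of Theorem~\ref{T:3} becomes
\[
  \frac{\pi^{2}}{6}\,\frac{B(X)^{2r-1}}{A(X)^{2r}}\,h\,W(p,h,r)\ <\ 4r^{2}\,p^{1/(2r)},\qquad X=H/h.
\]

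It remains to bound both sides. By Proposition~\ref{prop2}, the choice of $h$ makes $\sqrt2\,(2r/(\mathrm{e}\,h))^{r}p^{1/2}\le\sqrt2$ when $r\ge3$, so $W(p,h,r)\le2r-1+\sqrt2$, and makes $p^{1/2}/h^{2}\le1/4$ when $r=2$, so $W(p,h,2)\le15/4$; in either case $h\,W(p,h,r)\le C(r)\,p^{1/(2r)}$ with $C(r)$ an explicit constant of order $r^{2}$, and the displayed inequality reduces to $\tfrac{\pi^{2}}{6}B(X)^{2r-1}A(X)^{-2r}C(r)<4r^{2}$. For the geometric side one uses that $X=H/h$ is large --- at least a few hundred for $p\ge10^{15}$, and of size $\gtrsim2^{2r}$ once one feeds in $p>16^{r+1}r^{4}$ --- so that $A(X)$ is close to $1$ and the $\tfrac{2\pi^{2}}{9X}$ and $\tfrac{\log X}{X}$ contributions to $B(X)$ are negligible; thus $B(X)$ is essentially $1+1/h$ with $1/h=O(1/r)$, the case assumption again forcing $h\asymp r\,p^{1/(2r)}$ to be large. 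The matter then comes down to checking that the slack factor $4r^{2}/C(r)$ --- roughly $\mathrm{e}$ for large $r$, and a little above $2$ at $r=2$ --- outweighs $\tfrac{\pi^{2}}{6}B(X)^{2r-1}A(X)^{-2r}$.

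The main obstacle, I expect, is this last numerical verification. The small quantity $1/h$ enters $B(X)$ raised to the power $2r-1$, so for moderately large $r$ a crude estimate is not enough: one must genuinely exploit that admissibility of the non-trivial case forces $p$, and hence $h$, large enough that $B(X)^{2r-1}$ stays strictly below the available slack, tuning the constant $c_{r}$ accordingly. The endpoint $r=2$ is the tightest, and is exactly where the sharpened $r=2$ estimate of Proposition~\ref{prop2} --- the exact count of exceptional tuples together with the genus improvement replacing $2r-1$ by $2(r-1)$ --- is indispensable; that case would be confirmed by a direct check.
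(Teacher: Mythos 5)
Your overall architecture matches the paper's: dispose of a trivial range, then apply Theorem~\ref{T:3} with $e=p-1$, $s=0$, $H=2r\,2^{r\omega}p^{\frac14+\frac1{4r}}$ and $h\approx\frac{2r}{\mathrm{e}}p^{1/(2r)}$, bounding $W(p,h,r)$ via Proposition~\ref{prop2} and $A(X),B(X)$ via Proposition~\ref{intervals}. But the step you defer as ``the last numerical verification'' is exactly where the argument breaks, and it breaks for \emph{large} $r$, not at $r=2$ as you predict. Your non-trivial case only gives $2r\,2^{r\omega}<p^{1/4}$, hence (using merely $\omega\ge1$) $p^{1/(2r)}>4(2r)^{2/r}$, which tends to $4$ as $r\to\infty$; the hypothesis $p\ge10^{15}$ adds nothing here since $10^{15/(2r)}\to1$. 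With $h\approx\frac{2r}{\mathrm{e}}p^{1/(2r)}$ this permits $\frac{2r-1}{h}\approx\mathrm{e}/p^{1/(2r)}$ to be as large as about $0.68$, so the $1/h$ term alone forces $B(X)^{2r-1}\gtrsim \mathrm{e}^{0.68}\approx1.97$ and $\frac{\pi^2}{6}\cdot1.97\approx3.25$, while the available slack $4r^2p^{1/(2r)}/(hW)\approx\frac{2\mathrm{e}r}{2r-1+\sqrt2}$ tends to $\mathrm{e}\approx2.72$. Optimising over the constant $c_r$ does not rescue this: with $p^{1/(2r)}=P$ the best achievable product is $\frac{\mathrm{e}\pi^2}{6P}$, which exceeds $1$ for $P\le4.4$. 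So the inequality you need genuinely fails in a regime your case division allows. (By contrast $r=2$ is comfortable: $h\ge2\cdot10^{15/4}$ gives a left side near $1.7$ against a slack of about $2.13$.)

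The paper avoids this with a stronger initial reduction: it assumes WLOG that $(8\log2)r<\log p$, i.e.\ $p>2^{8r}$, the complementary case being dismissed on the grounds that then $2^{r\omega}p^{\frac14+\frac1{4r}}\ge p^{\frac12+\frac1{4r}}>g(p)$. This yields $p^{1/(2r)}\ge16$, hence $r/h\le1/8$, $rY\le0.129$, and $B(X)^{r}\le1.145$ uniformly in $r\ge2$, after which the single numerical check~(\ref{finale}) closes the proof. To repair your argument you should replace your dichotomy ($2r\,2^{r\omega}$ versus $p^{1/4}$) by the paper's one ($p$ versus $2^{8r}$), so that the non-trivial case carries the lower bound $p^{1/(2r)}\ge16$ rather than $\gtrsim4$; the rest of your outline then goes through essentially as in the paper.
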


\begin{proof}
Without loss of generality, we may assume $(8\log 2)r<\log p$;
indeed, if this is not true, then one finds
$g(p)<p^{\frac{1}{2}+\frac{1}{4r}}\leq 2^{r\omega}p^{\frac{1}{4}+\frac{1}{4r}}$.
It now follows from this that
$p^{\frac{1}{2r}}\geq 16$.

Set $H=Cr2^{r\omega}p^{\frac{1}{4}+\frac{1}{4r}}$ with $C=2$.
We may assume $g(p)>H$; otherwise, there is nothing to prove.
We will invoke Theorem \ref{T:3} with $e=p-1$ and $s=0$.
We choose
\[
  h:=\left\lceil \frac{2r}{e}(2p)^{\frac{1}{2r}}\left(\frac{r-1}{2r-1}\right)^{\frac{1}{r}}\right\rceil.
\]
Notice that
$h\geq 33$, 
$h\geq \frac{r}{2}\,p^{\frac{1}{2r}}$,
and
\[
   h\leq(1.031) \frac{2r}{e}p^{\frac{1}{2r}}\left(\frac{\sqrt{2}(r-1)}{2r-1}\right)^{\frac{1}{r}}\leq rp^{\frac{1}{2r}}
  \,.
\]
Setting $X=H/h$, we have the estimate
\[
  \frac{2H^2}{h}\leq e C^2
  \left(\frac{2r-1}{\sqrt{2}(r-1)}\right)^{\frac{1}{r}}
  r \left(2^{\omega}-1\right)^{2r}p^{1/2},
\]
whence to verify $2HX<p$, it suffices to prove
\begin{equation}\label{E:1}
e C^2 r   \left(\frac{2r-1}{\sqrt{2}(r-1)}\right)^{\frac{1}{r}} 2^{2r\omega}<p^{1/2}
\,.
\end{equation}
If (\ref{E:1}) fails, then
we have
$Cr 2^{r\omega}>\sqrt{\frac{r}{e}} \left(\frac{\sqrt{2}(r-1)}{2r-1}\right)^{\frac{1}{2r}}p^{1/4}$
and hence
\[
  g(p)>\sqrt{\frac{r}{e}} \left(\frac{\sqrt{2}(r-1)}{2r-1}\right)^{\frac{1}{2r}}p^{\frac{1}{2}+\frac{1}{4r}}>\frac{\sqrt{r}}{2}p^{\frac{1}{2}+\frac{1}{4r}}
  \,,
\]
which would contradict our hypothesis.
We record the estimate
\begin{equation}\label{record}
  X=\frac{H}{h}\geq C\,2^{r\omega}\,p^{\frac{1}{4}-\frac{1}{4r}} \geq 2000
  \,.
\end{equation}
For the Weil bound in Proposition \ref{prop2}, we have
\begin{align*}
W(p,h,r)
=
\left(\sqrt{2}\left(\frac{2r}{eh}\right)^rp^{1/2}+2r-1\right)
\leq
r(2r-1)(r-1)^{-1}.
\end{align*}

We bound the quantities $A(X)$ and $B(X)$ appearing in Proposition~\ref{intervals}.
Since $2^{r\omega}/r\geq 8$, we have $X/r\geq 8Cp^{1/8}$ and therefore,
using Bernoulli's inequality, we find
$$
  A(X)^r\geq 1-\frac{2r\pi^2}{9X}
  \geq
  1-\frac{\pi^2}{72\,p^{\frac{1}{8}}}
  \geq 
  0.998
  \,.
$$
We bound $B(X)$ in a similar way.
Writing $X=1+Y$, we use the inequality $(1+Y)^r\leq 1+rY+r^2Y^2$ that holds when $rY\leq 0.35$.
We have
\[
  rY=\frac{2\pi^2r}{9X}+\frac{r}{h}+\frac{\pi^2r}{3h}\frac{\log X}{X}
\]
and using $r/h\leq 2/p^{\frac{1}{2r}}\leq 1/8$ we find $rY\leq 0.129$.
Hence $B(X)^r\leq 1.145$.

After verifying that
\begin{equation}\label{finale}
  \frac{\pi^2}{6}\frac{1.145^2}{0.998^2}\frac{2}{e}(1.031)2^{\frac{1}{2r}}\left(\frac{2r-1}{r-1}\right)^{1-\frac{1}{r}}
  < 4
\end{equation}
the condition in the statement of Theorem~\ref{T:3} reduces to
\[
  4 r^2(2^\omega)^{2r}p^{\frac{1}{2}+\frac{1}{2r}}\leq H^2
  \,,
\]
which is true given our definition of $H$.
\end{proof}

The following is our final result from which
Theorem~\ref{Odd fellow} follows.

\begin{theorem}\label{T:Win2}
Suppose $p\geq 10^{15}$.  Let $r\geq 2$ be an integer. 
Let $e$ be an even divisor of $p-1$ and let $p_1,\dots,p_s$ be the primes dividing $p-1$ that do not divide $e$.
Set $\delta=1-\sum_{i=1}^sp_i^{-1}$.  If $\delta>0$ and
\[
g(p)<p^{\frac{1}{2}+\frac{1}{4r}}
\,,
\]
then we have
\[
  g(p)<2r\, \left(\left(2+\frac{s-1}{\delta}\right)2^{\omega-s}\right)^rp^{\frac{1}{4}-\frac{1}{4r}}
  \,.
\]
\end{theorem}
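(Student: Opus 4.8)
The plan is to re-run the proof of Theorem~\ref{T:Win}, now carrying the sieve through. Throughout I would write $\Lambda:=\left(2+\frac{s-1}{\delta}\right)2^{\omega-s}$ for the quantity that appears inside Theorem~\ref{T:3}; it equals $2^{\omega}$ when $s=0$ (so $\delta=1$) and in general satisfies $\Lambda\geq(s+1)2^{\omega-s}\geq 2$, so it plays exactly the role that $2^{\omega}$ played in Theorem~\ref{T:Win}. As there, I would first reduce to the case $(8\log 2)r<\log p$, equivalently $p^{\frac1{2r}}\geq 16$: in the complementary range $p^{1/4}$ is so small that the hypothesis $g(p)<p^{\frac12+\frac1{4r}}$ already forces $g(p)<2r\Lambda^{r}p^{\frac14+\frac1{4r}}$. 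Then I would assume for contradiction that $g(p)$ exceeds the asserted bound, set $H$ equal to that bound, and prepare to invoke Theorem~\ref{T:3} with the given sieve data $e,p_1,\dots,p_s,\delta$ (rather than $e=p-1$, $s=0$).

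For the parameter $h$ I would take the very choice made in the proof of Theorem~\ref{T:Win}; note that in that definition the letter $e$ stands for Euler's number, not the sieve modulus, so the choice of $h$ does not interact with the sieve. Using $p\geq 10^{15}$ this yields $h\geq 33$, $h\geq\frac r2 p^{\frac1{2r}}$ and $h\leq r p^{\frac1{2r}}$. The three hypotheses of Theorem~\ref{T:3} are then checked as before: $h\geq 2$ is immediate, $H\geq 2h$ amounts to $\Lambda^{r}p^{\frac14-\frac1{4r}}\geq 1$, and $2H^{2}<hp$ reduces---by the manipulation that produced~(\ref{E:1})---to an inequality of the form $\mathrm{(const)}\cdot r\left(\tfrac{2r-1}{\sqrt2\,(r-1)}\right)^{1/r}\Lambda^{2r}<p^{1/2}$; should this fail, back-solving gives $g(p)>\tfrac{\sqrt r}{2}p^{\frac12+\frac1{4r}}$, contradicting the hypothesis. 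One also records, as in~(\ref{record}), that $X=H/h$ is a constant multiple of $\Lambda^{r}p^{\frac14-\frac1{4r}}$ and hence large.

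It then remains to verify the single remaining inequality, the one in the conclusion of Theorem~\ref{T:3}. The Weil factor is bounded by $W(p,h,r)\leq r(2r-1)/(r-1)$ directly from Proposition~\ref{prop2}, with no sieve dependence, exactly as in Theorem~\ref{T:Win}; and since $X$ is large, Bernoulli's inequality together with $r/h\leq 2/p^{\frac1{2r}}\leq\frac18$ gives $A(X)^{r}\geq 0.998$ and $B(X)^{r}\leq 1.145$ just as before. Feeding these into the condition of Theorem~\ref{T:3}, the numerical constants collapse (a check of the same flavour as~(\ref{finale})) and one is left with $H^{2}\geq 4r^{2}\Lambda^{2r}p^{\frac12+\frac1{2r}}$, which holds by the choice of $H$. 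This produces the desired contradiction.

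The one place where the argument differs in substance from that of Theorem~\ref{T:Win} is that $s$, $\delta$ and $\omega$ are now all free, so every constant-chasing step must be pushed through using only the crude bound $\Lambda\geq(s+1)2^{\omega-s}\geq 2$: one must confirm that this alone forces $X$ into the range where the $A(X)$, $B(X)$ estimates are valid, and that the reductions of~(\ref{E:1})- and~(\ref{finale})-type still go through uniformly in the sieve parameters. I expect this bookkeeping---rather than any genuinely new idea---to be the main obstacle.
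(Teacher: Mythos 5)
Your proposal follows essentially the same route as the paper, whose proof of this theorem simply reruns the argument of Theorem~\ref{T:Win} with $H=2r\bigl(\bigl(2+\tfrac{s-1}{\delta}\bigr)2^{\omega-s}\bigr)^{r}p^{\frac14+\frac1{4r}}$ and the given sieve data (note that the exponent in the stated conclusion should be $\frac14+\frac1{4r}$; the minus sign is a typo, as Theorem~\ref{Odd fellow} confirms, and taking $H$ literally equal to the stated bound would break the final step). The one numerical point you assert mid-argument but then correctly flag as the real work at the end is that the guaranteed lower bound on $X$ drops from roughly $2000$ to roughly $500$ (the paper uses only $X/r\geq 2Cp^{1/8}$ here), so the constants are \emph{not} quite as before: the paper replaces $A(X)^{r}\geq 0.998$ and $B(X)^{r}\leq 1.145$ by $0.992$ and $1.158$ and re-verifies that the analogue of~(\ref{finale}) is still less than~$4$.
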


\begin{proof}
We proceed almost exactly as in the proof of Theorem~\ref{T:Win}
except that we set $H=Cr((2+(s-1)\delta^{-1})2^{\omega-s})^rp^{\frac{1}{4}+\frac{1}{4r}}$.
The estimate~(\ref{record}) becomes instead
\[
  X=\frac{H}{h}\geq C\left(\left(2+\frac{s-1}{\delta}\right)2^{\omega-s}\right)^rp^{\frac{1}{4}-\frac{1}{4r}}
  \geq 500
  \,.
\]
The bounds for $A(X)$ and $B(X)$ change slightly.  In this case we have
$X/r\geq 2Cp^{1/8}$ which leads to $A(X)^2\geq 0.992$,
and $rY\leq 0.138$ which leads to $B(X)^r\leq 1.158$.  However, the expression
that appears on the lefthand side of~(\ref{finale}) adjusted appropriately is still less than~$4$.
\end{proof}

\subsection*{Acknowledgements}
This work was conceived when the first author visited the second author: the authors wish to thank the School of Science at UNSW Canberra at ADFA and the Rector's Visiting Fellowship program for supporting the visit.

%
%
%


\end{document}